\documentclass[a4paper,11pt]{amsart}

\usepackage{cite}
\usepackage{algpseudocode}
\usepackage{amsmath, amsthm}
\usepackage{textcomp}
\usepackage{amssymb}
\usepackage[all]{xy}
\usepackage{enumerate}
\usepackage{fancyhdr}
\usepackage{lscape}
\usepackage{url}
\usepackage{setspace}
\usepackage{longtable}
\usepackage{tikz}
\usepackage{wasysym}

\usepackage{longtable}

\usepackage{amsthm}
\theoremstyle{plain}

\newtheorem*{theorem*}{Main Theorem}

\theoremstyle{plain}
\newtheorem{theorem}{Theorem}[section]

\theoremstyle{definition}
\newtheorem{lm}[theorem]{Lemma}

\newtheorem{cor}[theorem]{Corollary}
\newtheorem{defi}[theorem]{Definition}

\theoremstyle{nonumberplain}

\theoremstyle{remark}
\newtheorem{rmk}{Remark}

\newcommand*\circled[1]{\tikz[baseline=(char.base)]{
           \node[shape=circle,draw,inner sep=1pt] (char) {#1};}}

%\def\baselinestretch{1.3}
%\onehalfspacing
\setlength{\topmargin}{0cm}
\setlength{\textheight}{23.9cm}
\setlength{\textwidth}{17.25cm}
\setlength{\oddsidemargin}{-0.6cm}
\setlength{\evensidemargin}{-0.6cm}

\DeclareMathOperator{\Pic}{Pic}
\DeclareMathOperator{\Int}{Int}

\DeclareMathOperator{\Proj}{Proj}

\DeclareMathOperator{\rank}{rank}

\DeclareMathOperator{\Cox}{Cox}

\DeclareMathOperator{\N}{N}

\DeclareMathOperator{\Mob}{Mob}
\DeclareMathOperator{\MMob}{\overline{Mob}}
\DeclareMathOperator{\Amp}{Amp}

\DeclareMathOperator{\Nef}{Nef}
\DeclareMathOperator{\Bs}{Bs}

\DeclareMathOperator{\NE}{NE}
\DeclareMathOperator{\NNE}{\overline{NE}}
\DeclareMathOperator{\AAmp}{\overline{Amp}}

\DeclareMathOperator{\Aut}{Aut}
\DeclareMathOperator{\Bir}{Bir}

\DeclareMathOperator{\base}{Bs}
\DeclareMathOperator{\codim}{Codim}

\DeclareMathOperator{\supp}{Supp}
\DeclareMathOperator{\FLIP}{FLIP}

\newcommand{\C}{\mathbb{C}}
\newcommand{\Z}{\mathbb{Z}}
\newcommand{\HH}{\mathcal{H}}

\newcommand{\Q}{\mathbb{Q}}

\newcommand{\R}{\mathbb{R}}

\newcommand{\PP}{\mathbb{P}}
\renewcommand{\H}{\text{H}}
\renewcommand{\N}{\text{N}}
\newcommand{\x}{\mathrm{x}}
\newcommand{\s}{\mathrm{s}}
\newcommand{\y}{\mathrm{y}}
\newcommand{\z}{\mathrm{z}}
\renewcommand{\t}{\mathrm{t}}
\newcommand{\w}{\mathrm{w}}

\newcommand\qt{{\slash\kern-0.65ex\slash}}

\title{Mori dream spaces and Birational rigidity of Fano 3-folds}

\author{Hamid Ahmadinezhad}
\author{Francesco Zucconi}

\keywords{Fano varieties; Sarkisov program; Variation of Geometric Invariant Theory; Mori Dream Spaces; Birational Rigidity.\\}
\subjclass[2010]{14E05, 14E30 and 14E08}

\begin{document}

\begin{abstract} We highlight a relation between the existence of Sarkisov links and the finite generation of (certain) Cox rings. We introduce explicit methods to use this relation in order to prove birational rigidity statements. To illustrate, we complete the birational rigidity results of Okada for Fano complete intersection 3-folds in singular weighted projective spaces. 
\end{abstract}

\maketitle

\section{Introduction} Rationality question for varieties covered by rational curves (e.g.\,unirational varieties) has been a fundamental problem in algebraic geometry, the answer to which has opened various subjects across the field. The first examples of unirational varieties that are irrational were found about the same time by Clemens and Griffiths\,\cite{Grif}, Iskovskikh and Manin\,\cite{isk-manin}, Artin and Mumford\,\cite{Artin}. While the results of \cite{Grif} and \cite{Artin} indicate that the varieties under study (respectively, a smooth cubic 3-fold and some special singular quartic 3-folds in the weighted projective space $\PP(1,1,1,1,2)$) are irrational, the result in \cite{isk-manin} shows that a smooth quartic 3-fold is not birational to any conic bundle or fibration into del Pezzo surfaces, or any other Fano variety, including $\PP^3$, that is to say it is irrational in a strong sense. This latter phenomena is known as ``birational rigidity'', see Definition\,\ref{bir-rigid}. Birational rigidity also plays an important role in the birational classification of algebraic varieties.

Methods of Isovskikh and Manin have been improved in various directions and there are by now a handful of technical methods that can be used for proving birationally rigidity of a given variety. In this article, we take advantage of a factorisation property, known as Sarkisov program developed by Sarkisov, Corti, Hacon and McKernan \cite{Corti-Sar, HM-Sar}, and the notion of Mori dream spaces \cite{hu}, to give a new outlook on birational rigidity. The existing methods (of ``exclusion'') can be derived from this, this will be verified for some of them. We develop some explicit tools to use this new method in practice. As an illustration, we prove birational rigidity for some Fano 3-folds, and complete the question of birational rigidity for Fano 3-folds embedded, anticanonically, in singular weighted projective spaces that are quasi-smooth (their affine cone has only isolated singularities). In the next two subsections, we explain, first, the background on the problem together with our contribution, and then, second, we highlight the ideas behind the methodology.

\subsection{Birational classification}
The birational classification in dimension three saw a breakthrough in the eighties with effort of many people including Mori, Koll\'ar, Kawamata, Shokurov, Reid: Minimal Model Program (MMP for short) was proved\cite{mori}, see\,\cite{kollar-mori} for an introduction. Given a 3-fold $Z$, smooth or with mild singularities (terminal and $\Q$-factorial), MMP produces a birational model to $Z$ for which either the canonical divisor is nef or a Mori fibre space, when $Z$ is uniruled; see Definition\,\ref{Mfs} for the definition of a Mori fibre space. The simplest Mori fibre spaces are Fano varieties with Picard number one.

The classification (for the uniruled varieties) is reduced to studying Mori fibre spaces, and in particular Fano 3-folds, and relations among them. Birational rigidity for a Fano 3-fold means that its birational class consists of only one such Mori fibre space. A Fano 3-fold $X$ can be embedded in a weighted projective space via the anticanonical ring
\[R=\bigoplus_{n\in\Z}\H^0\left(X,-nK_X\right)\]
There are 95 families embedded as hypersurfaces, 85 families in codimension 2, 70 families in codimension 3 and so on, see\,\cite{database}. The result of Iskovskikh and Manin was generalised for general members of all 95 families by Corti, Pukhlikov and Reid\,\cite{CPR} and later was extended for any quasi-smooth member in these 95 families by Cheltsov and Park\,\cite{VJ-95}. The investigation of birational rigidity in codimension two families was initiated with the work of Iskovskikh and Pukhlikov\,\cite{pukh-c.i.}, where they prove that a general smooth complete intersection of a quadric and a cubic in $\PP^5$ is birationally rigid (see\,\cite[\S2.5 and \S2.6]{Pukh-book} for a detailed proof). This was later generalised by Okada \cite{okada} to show that within the remaining 84 families, a general member in 18 families are birationally rigid, while this property fails for all the remaining families. In this article we generalise this result to show that birational rigidity needs no generality assumption in Okada cases, and all is needed is the quasi-smoothness. 

\begin{theorem}\label{rigidity-Fano} Let $X$ be a quasi-smooth codimension two Fano 3-fold anticanonically embedded in a singular weighted projective space in one of the 85 families above. If a general member in its family is birationally rigid then $X$ is birationally rigid. 
\end{theorem}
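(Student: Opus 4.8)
The plan is to run the Sarkisov program directly on $X$ and to show that it produces no link to a Mori fibre space other than $X$ itself. By the Noether--Fano--Iskovskikh method in Corti's form, $X$ is birationally rigid precisely when every mobile linear system $\mathcal{H}\subseteq|-nK_X|$ admitting a maximal centre $C$ gives rise to a Sarkisov link whose target is again isomorphic to $X$ (and when there is no maximal centre at all, $X$ is even super-rigid). So the whole problem reduces to enumerating the possible maximal centres $C\subset X$ and, for each, identifying the associated link. These centres fall into three classes: (i) nonsingular points of $X$; (ii) irreducible curves $\Gamma\subset X$; (iii) the terminal cyclic quotient singularities $p_1,\dots,p_k$ of $X$, whose types $\frac1{r_j}(1,a_j,r_j-a_j)$ are fixed by the family and whose existence and exact local form are guaranteed by quasi-smoothness.

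First I would dispose of classes (i) and (iii), where the analysis is insensitive to the choice of quasi-smooth member. For a nonsingular point $p$, the exclusion rests on the intersection inequality obtained by cutting $X$ with two general members of $\mathcal{H}$ and comparing the multiplicity of the resulting $1$-cycle at $p$ with $(-K_X)^3$; this uses only $(-K_X)^3$ and the local ring of $X$ at a smooth point, hence holds for every quasi-smooth $X$ exactly as for the general member, i.e. it always excludes $p$. For a quotient point $p_j$, take the Kawamata blow-up $\sigma_j\colon Y_j\to X$ with weights $(1,a_j,r_j-a_j)$ and run the two-ray game on $Y_j$. The point is that $Y_j$ sits inside the corresponding weighted blow-up $\mathcal{Y}_j$ of the ambient $\PP$, a rank-two toric variety, and the two-ray game on $Y_j$ is cut out from the (toric, $X$-independent) variation of GIT of $\mathcal{Y}_j$; quasi-smoothness is exactly what is needed for the wall-crossings on $\mathcal{Y}_j$ to restrict to flips/flops and divisorial or fibre-type contractions on $Y_j$. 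By the relation between termination of this game and finite generation of the relevant Cox ring set out above, the output of the game --- and whether it identifies the end product with $X$ or with a genuinely different Mori fibre space --- is read off from an explicitly presented Cox ring depending only on $r_j$, $a_j$, the leading terms at $p_j$ of the two defining forms (controlled by quasi-smoothness) and the numerical data of the family. Hence the link attached to each $p_j$ is the same for all quasi-smooth members; in particular it returns to $X$ whenever it does so for the general member.

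The substantive case is (ii). A curve $\Gamma\subset X$ can be a maximal centre only if $(-K_X\cdot\Gamma)$ is very small relative to $(-K_X)^3$ and to $p_a(\Gamma)$, and a short numerical analysis shows that such a $\Gamma$ must pass through one or more of the $p_j$ (away from the singular locus $-K_X$ is too positive for the Noether--Fano inequality along a curve). On a general member such low-degree curves simply do not exist, which is why Okada's argument never meets them; on a special quasi-smooth member they can. I would handle them in two steps. First, use quasi-smoothness of the defining complete intersection $X=X_{d_1,d_2}\subset\PP$ to bound below the degree of any irreducible curve through a given $p_j$: this is an explicit statement about the monomials of small weight occurring in the two defining forms, and it leaves only finitely many combinatorial types of $\Gamma$ per family. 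Second, for each surviving type, blow up the singular point(s) that $\Gamma$ meets and apply the same Cox-ring / two-ray-game machinery to the strict transform of $\Gamma$, showing that the discrepancies along $\Gamma$ and the forced multiplicities of $\mathcal{H}$ are incompatible, so $\Gamma$ is not a maximal centre. This verification is finite and uniform over the family. The expected main obstacle is precisely this step: producing a degree bound for curves through the quotient singularities that is sharp enough to leave finitely many cases, and then clearing those cases --- this is where the explicit methods advertised in the abstract do the real work.

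Combining the three classes, on any quasi-smooth $X$ in a family whose general member is birationally rigid every candidate maximal centre is either excluded outright (nonsingular points, and every curve by class (ii)) or produces a Sarkisov link whose target is again $X$ (the same links as for the general member, by the local and Cox-ring analysis of classes (i) and (iii)); no link exits $X$ to a different Mori fibre space. By the Sarkisov program, every birational map from $X$ to a Mori fibre space therefore factors through links that return to $X$, that is, $X$ is birationally rigid.
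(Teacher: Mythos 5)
Your overall framework (Noether--Fano, classification of maximal centres, Kawamata blow-up plus two-ray game read off from a toric ambient) is the right one and matches the paper's machinery. But the proposal begs the question at exactly the point where the theorem has content. In class (iii) you assert that the Cox ring of the blow-up $Y_j$, and hence the outcome of the two-ray game at a quotient point $p_j$, depends only on $r_j$, $a_j$, the numerical data of the family, and ``the leading terms at $p_j$ of the two defining forms (controlled by quasi-smoothness)'', and you conclude that the link attached to $p_j$ is the same for every quasi-smooth member. This is false, and it is precisely what Okada's generality assumptions (Table~\ref{okada-table}) are about: quasi-smoothness forces only \emph{some} monomials to appear in $f$ and $g$, while the generality conditions concern further monomials (e.g.\ $z^2s\in g$ for family 64, $s^2\in f$ for family 37). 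Whether these extra monomials appear changes the vanishing orders $m_f,m_g$ along the exceptional divisor, hence the grading of the Cox ring of the ambient toric blow-up, hence the two-ray game itself --- sometimes forcing unprojections to reach the correct toric embedding, sometimes turning an untwisting involution into a bad link or vice versa (compare the two cases of $X_5\subset\PP(1,1,1,1,2)$ in \S\ref{hyper}, where the general and special members behave in entirely different ways at the same singular point). The entire proof in the paper is the case-by-case verification, for each family and each failure of the generality condition, that the modified game still either excludes the point ($-K_Y$ on the boundary of, or outside, the mobile cone; non-terminal anti-flip; or the family-of-curves obstruction of Lemma~\ref{family-mob}) or returns to $X$ via an involution. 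Your proposal replaces this verification with an assertion that it is unnecessary.

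A secondary difference: you propose to re-prove the exclusion of nonsingular points and of curves from scratch, including a new degree bound for curves through the quotient singularities, which you identify as the main obstacle. The paper avoids this entirely by observing that Okada's generality assumptions enter only in the exclusion of the specific singular points listed in Table~\ref{okada-table}; his treatment of smooth points, curves, and the remaining singular points applies verbatim to every quasi-smooth member, and the hypothesis ``a general member is birationally rigid'' is invoked for exactly that. So the curve analysis you flag as the hard part is not needed, while the part you dismiss as uniform is where all the work lies.
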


Recall that it was proved in \cite{pukh-c.i.} that a general smooth $X_{2,3}\subset\PP^5$ is birationally rigid. Theorem\,\ref{rigidity-Fano} together with the result of \cite{VJ-95} and \cite{okada} show that the only remaining complete intersection Fano 3-folds, with $-K\sim\mathcal{O}(1)$, for which birational rigidity is yet to be discovered are $X_{2,2,2}\subset\PP^6$ and the special cases in $X_{2,3}\subset\PP^5$.

\subsection{Mori dream spaces and birational rigidity}
It is known that any birational map between Mori fibre spaces factors into a finite sequence of simpler maps (between Mori fibre spaces), known as Sarkisov links. It follows from the nature of these Sarkisov links, as we explore below in Section\,\ref{MDS}, that any Sarkisov link is given by a variation of geometric invariant theory (VGIT) for a group $(\C^*)^2$ acting on some affine space. It is well-known that having finitely many birational models through this VGIT is equivalent to the Cox ring, of a certain variety with Picard number two, being finitely generated. We show that existence of a Sarkisov link is equivalent to such finite generation and mobility of the anticanonical divisor of the variety with Picard number two, together with a technical (and necessary) assumption on the singularity of the birational models (see Theorem\,\ref{main-theoretical} for a precise statement). A general treatment of the behaviour of the anticanonical divisor against the mobile cone in relation with the birational models of the variety can be found in \cite[\S7]{Shokurov}. Proving that no Sarkisov link from a Mori fibre space to another one exists implies that it is birationally rigid.  

A Sarkisov link starting from a Fano variety with Picard number one can start only with a blow up. For a given Fano variety, and a quotient terminal singular point on it, we take advantage of Kawamata's theorem\,\cite{kawamata} and techniques developed in \cite{dP4} to give (most) generators of the Cox ring of the blow up variety of the singular point. For varieties that we study, we show the non-existence of Sarkisov links using this construction. Except in one case, we are able to show that the blow up variety is a Mori dream space; this does not follow from any known result and can be interesting in its own right.

The main advantage of the method we use is that it is systematic. Once it is run on a model it either excludes the possibility of a Sarkisov link or it provides a link to a new model. %On the other hand, the existing exclusion methods, while being very powerful, depend on the particular situation and a priori there is no way to decide which method to use (except with experience!). 
What we offer here can be used to exclude possibility of Sarkisov links systematically, with extra work, gaining extra rewards such as finding explicit birational maps from a Fano 3-fold to an elliptic fibration or a $K3$-fibration (e.g.\,\ref{full-Cox}), or it can be used as a tool to guide to which (known) method to use (e.g.\,\ref{f-mobility}).

\subsubsection*{Notation and convention}
We work over the field of complex numbers. Suppose $X$ is a $\Q$-factorial variety, the Neron-Severi group $\N^1(X)$ is the set of $\Q$-divisors on $X$ modulo numerical equivalence, and $\N_1(X)$ its dual cone of $1$-cycles modulo numerical equivalence. The following are various sub-cones in $\N^1(X)$ that we need.

\begin{center}
\begin{tabular}{ll}
The mobile cone & $\Mob(X)=\{D\in\N^1(X) |\codim\base(|D|)>1\}$\\
The nef cone & $\Nef(X)=\{D\in\N^1(X) | D.C\geq 0 \text{ for all curves }C\subset X\}$\\%\subset\Eff(X)$\\
The ample cone & $\Amp(X)=\{D\in\N^1(X) | D \text{ is ample }\}\subset\Nef(X)$ 
\end{tabular}
\end{center}

Also denote the cone of effective $1$-cycles by 
\[\NE(X)=\{\sum a_\alpha[C_\alpha] \text{ such that } a_\alpha\in\Q_{\geq 0}\}\subset\N_1(X)\] We have that $\Amp(X)=\Int(\Nef(X))$,
and $\AAmp(X)$ is the dual of $\NNE(X)$. See \cite{kollar-mori} for definitions and detailed study.

In this paper we use the notation FLIP for a flip, anti-flip or a flop, not necessarily in the Mori category. More precisely, a FLIP is a birational map $\FLIP\colon X\dashrightarrow X^\prime$, that is an isomorphism in codimension one, with a factorization 
\[\xymatrixcolsep{1.5pc}\xymatrixrowsep{2.3pc}
\xymatrix{
X\ar^{\FLIP}@{-->}[rr]\ar_{\psi}[rd]&&X^\prime\ar^{\varphi}[ld]\\
&Z&
}\]
such that $\varphi$, respectively $\psi$, is small contraction of extremal type corresponding to an extremal ray of $\NNE(X)$, respectively $\NNE(X^\prime)$. In particular, we have that $\rank\Pic(X/Z)=\rank\Pic(X^\prime/Z)=1$.

\subsubsection*{\bf Acknowledgments.} The authors would like to thank Cinzia Casagrande, Ivan Cheltsov, Okada Takuzo, Jihun Park, Vyacheslav Shokurov and  Konstantin Shramov for useful conversations and showing interest in this project. This project was initiated while the authors received support as researchers in pair from Trento. We would like to express our full gratitude to Marco Andreatta and CIRM (Trento) for the hospitality provided. A major part of this project was done while the first author visited Max Planck Institut f\"ur Mathematik in Bonn. We would like to specially thank MPIM for providing a great research environment, and particularly for the support provided for a short visit of the second author to MPIM.

\section{Mori dream spaces and Sarkisov links}\label{MDS}
In this section, we explore the connection between the notion of Mori dream spaces and the existence of Sarkisov links. Let us begin by defining the notion of pliability and related concepts.

\subsection{Pliability and birational rigidity}

\begin{defi}\label{Mfs} A Mori fibre space, denoted by Mfs, is a variety $X$ together with a morphism $\varphi\colon X\rightarrow Z$ such that
\begin{enumerate}[(i)]
\item $X$ is $\Q$-factorial and has at worst terminal singularities,
\item $-K_X$, the anticanonical class of $X$, is $\varphi$-ample,
\item and $X/Z$ has relative Picard number $1$, and
\item $\dim Z<\dim X$.
\end{enumerate}\end{defi}

For example, Fano varieties, that are $\Q$-factorial varieties with Picard number $1$ and ample anticanonical divisor with at worst terminal singularities, are a typical case of Mori fibre spaces.

\begin{defi}\label{bir-rigid} Let $X\rightarrow Z$ and $X^\prime\rightarrow Z^\prime$ be Mori fibre spaces. A birational map $f\colon X\dashrightarrow X^\prime$ is {\it square} if it fits into a commutative diagram\begin{center}$\xymatrixcolsep{3pc}\xymatrixrowsep{3pc}
\xymatrix{
X\ar@{-->}^f[r]\ar[d]& X^\prime\ar[d]\\
Z\ar@{-->}[r]^g&Z^\prime
}$\end{center} where $g$ is birational and, in addition, the map $f_L\!\colon\!X_L\!\dashrightarrow\!X^\prime_L$ induced on generic fibres  is biregular, where $L$ denotes the generic point of $Z$. In this case we say that $X/Z$ 
and $X^\prime/Z^\prime$ are {\it square birational}. We denote this by $X/Z\sim_{\text sq} X^\prime/Z^\prime$. A square birational map that is also biregular is called {\it square biregular}.
\end{defi}

\begin{defi}[\cite{corti-mella}]\label{pliability}The {\it pliability} of a Mori fibre space $X\rightarrow Z$ is the set
\[\mathcal{P}(X\slash Z)=\{\text{Mfs } Y\rightarrow T\mid X\text{ is birational to } Y\}\slash\sim_{\text sq}\] 

A Mori fibre space $X\rightarrow Z$ is said to be {\it birationally rigid} if $\mathcal{P}(X\slash Z)$ contains a single element. It is called {\it birationally super-rigid} if in addition $\Bir(X)=\Aut(X)$.\end{defi}

%\begin{defi}A Mori fibre space is birationally super-rigid if it is birationally rigid and $\Bir(X)$ is generated by its automorphisms and the fibrewise transforms.\end{defi}

\subsection{Sarkisov program} Sarkisov program is a method to factorise birational maps between two Mori fibree spaces. In other words, if $X\dashrightarrow X^\prime$ is a birational map between Mfs $X/Z$ and $X^\prime/Z^\prime$ then there is a decomposition
\begin{center}$\xymatrixcolsep{3pc}\xymatrixrowsep{3pc}
\xymatrix{
X\ar@{-->}^{\varphi_1}[r]\ar[d]&X_1\ar@{-->}^{\varphi_2}[r]\ar[d]&\cdots X_{n-1}\ar@{-->}^{\varphi_n}[r]\ar[d]&X_n\ar@{-->}^{\varphi_{n+1}}[r]\ar[d]& X^\prime\ar[d]\\
Z&Z_1&Z_{n-1}&Z_n&Z^\prime
}$\end{center}
where each $X_i/Z_i$ is a Mfs and each $\varphi_i$ is a Sarkisov link. See \cite[\S2]{Corti1} and \cite[Definition~3.4]{Corti-Sar} for details and explanation of Sarkisov links and the decomposition. The key point for our purpose is that each Sarkisov link is obtained by a geometric maneuver called a 2-ray game (\cite[\S2.2]{Corti1}). Here, we explain the rules of the game, with additional emphasis on the category it is plaid in.

\subsection{2-ray game}\label{2-ray-game}
We follow the construction in\,\cite{Corti1, Corti-Sar}. Suppose $X/S$ is a Mori fibre space. Let $Y$ and $T$ be two varieties with a morphism $Y\rightarrow T$ constructed in one of the following ways 
\begin{enumerate}[(1)]
\item either $T=S$ and $\varphi\colon Y\rightarrow X$ is a divisorial blow up in the Mori category,
\item or $Y=X$ and $T$ is the image of a map $\sigma\colon S\rightarrow T$, where $\sigma$ is one step of MMP on $S$.
\end{enumerate}
We have that $\rank\Pic(Y)-\rank\Pic(T)=2$, and hence $\NNE(Y/T)\subset\R^2$
\[\xygraph{
%{<0cm,0cm>;<1cm,0cm>:<0cm,1cm>::}
!{(0,0) }="a"
!{(0.3,1.7) }*+{R_1}="b"
!{(1.2,1)}*+{\NNE(Y/T)}="c"
!{(1.7,0.4) }*+{R_2}="d"
"a"-"b"  "a"-"d" 
}  \]
In particular, over the base $T$, there are at most two extremal projective morphisms, corresponding to $R_1$ and $R_2$. Assume $\varphi$, respectively $\psi$, correspond to $R_1$, respectively $R_2$. Because of our choice of $Y$, the ray $R_1$ is always contained in $\NE(Y/T)$. 

\begin{rmk} Note that $\NE(Y/T)$ may not be closed, i.e.\,$R_2\not\subset\NE(Y/T)$, in which case $\psi$ does not exist. %For example a general hypersurface of bi-degree $(4,3)$ in $\PP^1\times\PP^3$ is a Mori fibre space with Picard group of rank two and has this property, see \cite[Theorem~1.1,~(iv)]{ottem}. 
\end{rmk}

We play a game with the following rules:

{\bf Step\,1.} %If $\supp\{R_2\}$ contains no class of an effective 1-cycle, or 
If $\psi$ does not exist then we say the game terminates with an {\it error}. Suppose $\psi$ is a projective morphism. If $\dim\supp\{R_2\}>1$, then we stop and say the game {\it terminates}, where $\supp\{R_2\}$ is the support of all effective 1-cycles whose class belongs to the ray $R_2$. Otherwise, $\psi\colon Y\rightarrow Z$ is a (projective and) small contraction, contracting the support $\supp\{R_2\}$. In this case, if the FLIP does not exist, the game terminates with an error. Otherwise, there exists a variety $Y_1$ together with a projective small morphism $\varphi_1\colon Y_1\rightarrow Z$, defined over the base $T$, such that $\rank\Pic(Y_1/T)=2$. We go to the next step.

{\bf Step\,2.} By construction $\NNE(Y_1/T)\subset\R^2$, so there exist at most one more extremal projective morphism $\psi_1\colon Y_1\rightarrow Z_1$, over the base $T$, corresponding to the other extremal ray on $\NNE(Y_1/T)$.  Now we run Step\,1 for $Y_1$ with this new ray, and continue the process.

%Note that one of the two  boundary rays correspond to $\varphi_1$. 
Note that there is no guarantee that at each step the next move exists. %If all FLIPS exist we call this process an {\it extremal 2-ray game} played on $Y/T$.

\begin{defi} A 2-ray game that terminates after finitely many steps with no errors is called a {\it 2-ray link}. Hence, a 2-ray link terminates with contracting a divisor or an extremal fibration.
%We say that a 2-ray game is a {\it 2-ray link} if after finitely many steps the game terminates with
%\[\xymatrixcolsep{1.5pc}\xymatrixrowsep{2.3pc}
%\xymatrix{
%&Y_m\ar_{\varphi_m}[ld]\ar^{\psi_m}[rd]&\\
%Z_{m-1}&&Z_m
%}\]
%for some $m\geq 1$, and $\NE(Y_m/T)=\NNE(Y_m/T)$.
\end{defi}

\begin{lm}\label{2-ray-MDS} Let $Y$ and $T$ be as above. Then the 2-ray game on $Y/T$ is a 2-ray link if and only if the relative Cox ring of $Y$ over $T$ is finitely generated.
\end{lm}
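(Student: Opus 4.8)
The plan is to prove both directions by relating the steps of the 2-ray game to the chambers of the nef cone of a small $\Q$-factorial modification of $Y$ over $T$, using the fact that $\rank\Pic(Y/T)=2$ so that $\N^1(Y/T)\cong\R^2$ and all the relevant cones are closed convex cones in the plane. Write $\mathcal{R}=\bigoplus_{D}\H^0(Y,\mathcal{O}_Y(D))$ for the relative Cox ring of $Y$ over $T$, the sum being over a finitely generated subgroup of $\Cl(Y/T)$ mapping onto it (one has to fix, once and for all, such a group and check the statement is independent of the choice; this is standard).

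First I would treat the implication ``2-ray link $\Rightarrow$ $\mathcal{R}$ finitely generated''. Suppose the game on $Y/T$ terminates after finitely many steps $Y=Y_0\dashrightarrow Y_1\dashrightarrow\cdots\dashrightarrow Y_k$, each $Y_i\to T$ with $\rank\Pic(Y_i/T)=2$, each $Y_i\dashrightarrow Y_{i+1}$ a FLIP over some intermediate base, and the last morphism $Y_k\to Z_k$ either a divisorial contraction or an extremal fibration over $T$. Each $Y_i$ is a small $\Q$-factorial modification of $Y$ over $T$ (being obtained by a sequence of isomorphisms in codimension one), so $\N^1(Y_i/T)=\N^1(Y/T)=\R^2$, and the pseudo-effective cone $\Eff(Y/T)$ is a fixed two-dimensional cone. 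The key geometric point is that the nef cones $\Nef(Y_i/T)$ tile $\MMob(Y/T)$ (the closure of the mobile cone over $T$): each FLIP corresponds to crossing a wall between $\Nef(Y_i/T)$ and $\Nef(Y_{i+1}/T)$, and termination with no errors means exactly that after finitely many walls one reaches a boundary ray of $\Eff(Y/T)$ on each side — one side being the boundary ray along which the first contraction $R_1$ lives, the other being the ray reached by the terminal divisorial contraction or fibration. Thus $\Eff(Y/T)$ is a rational polyhedral cone, it decomposes into finitely many rational polyhedral chambers on each of which the Cox ring construction gives a finitely generated ring (each $Y_i$ being projective over $T$ with finitely generated section ring for the finitely many faces of its nef cone), and one assembles these finitely many generators, together with generators for the sections of a divisor on the terminal ray, into a finite generating set for $\mathcal{R}$. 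This is exactly the ``Mori dream'' packaging of Hu--Keel adapted to the relative setting over $T$; I would cite \cite{hu} for the equivalence between finite generation of the Cox ring and a finite chamber decomposition of $\Eff$ with each chamber a nef cone of a small modification.

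For the converse, ``$\mathcal{R}$ finitely generated $\Rightarrow$ 2-ray link'', I would use that finite generation of $\mathcal{R}$ gives (again by the relative Hu--Keel picture) a finite rational polyhedral chamber decomposition of $\Eff(Y/T)$, with the chambers containing an ample (over $T$) class being the nef cones $\Nef(Y_i/T)$ of the finitely many small $\Q$-factorial modifications $Y_i/T$ of $Y/T$, and these $Y_i$ are all projective over $T$. Now I run the 2-ray game: starting from $Y_0=Y$, whose nef cone is one of these chambers, the ray $R_1$ lies in $\NE(Y/T)$ by the construction of $Y$ (case (1) or (2) above), so the first wall we cross is the opposite one; crossing it is either a FLIP (if the opposite face supports a small contraction, which happens precisely when the adjacent chamber is again a full-dimensional nef cone of some $Y_1$, and the FLIP $Y_0\dashrightarrow Y_1$ exists because both are projective over the common small-contraction base $Z$ — this is where one invokes that the relevant flip/flop exists as a map between these already-constructed projective models), or else that opposite face lies on the boundary $\partial\Eff(Y/T)$ and the contraction it defines is divisorial or a fibration, so the game terminates. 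Since there are only finitely many chambers and we never revisit one (we move monotonically across $\MMob(Y/T)$ away from $R_1$), after finitely many steps we hit the boundary and terminate with no error. Hence the game is a 2-ray link.

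The main obstacle I expect is bookkeeping the base changes and the ``no error'' conditions: one must check carefully that at each wall-crossing the required FLIP genuinely exists as a birational map between the two models dictated by the chamber decomposition (rather than merely a numerical wall), and that ``termination with an error'' in the game corresponds precisely to failure of finite generation on that side — i.e.\ to the relevant part of $\Eff(Y/T)$ not being rational polyhedral, or a wall not being crossable within the category. A secondary subtlety is the relative setting: everything (nef, mobile, effective cones; Cox ring; Hu--Keel) must be phrased over $T$, and one should note that since $\rank\Pic(Y/T)=2$ there are at most two boundary rays and hence the chamber structure is automatically a ``fan of intervals'' in $\R^2$, which is what makes the finitely-many-steps bookkeeping clean.
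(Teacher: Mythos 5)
Your proposal is correct and follows essentially the same route as the paper: both arguments reduce the statement to Hu--Keel's characterisation of (relative) Mori dream spaces, identifying the error-free existence of each step of the game with the semi-ampleness of the extremal rays of $\Nef(Y_i/T)$ and the finiteness of the game with the finite decomposition of $\MMob(Y/T)$ into the nef cones of the small modifications $Y_i$, which in rank $2$ is automatically a fan of intervals. The paper's proof is just a terser version of yours, citing \cite[Proposition~2.9, Definition~1.10]{hu} for exactly the equivalences you spell out.
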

\begin{proof} It follows from \cite[Proposition\,2.9]{hu} that the finite generation of the Cox ring is equivalent to $Y/T$ being a Mori dream space, see \cite[Definition\,1.10]{hu}. The fact that the first step of the 2-ray game exists (without any errors) is equivalent to $\Nef(Y)$ being the affine hull of finitely many semi-ample line bundles (and similarly at any step for $Y_i$). Finiteness of the game now implies there are only finitely many varieties isomorphic to $Y$ in codimension 1 such that the union of their nef cones gives the mobile cone of $Y$. Note that the latter is strictly forced by the fact that Picard number is 2.
\end{proof}

\begin{defi}\label{link} A finite extremal 2-ray game for which all the steps take place in the Mori category is called a {\it Sarkisov link}.\end{defi}

It is standard to make a case distinction when a 2-ray link is not a Sarkisov 2-ray link but it almost is, i.e.\,when the curves contracted by the last map of the 2-ray link, $\psi_m\colon Y_m\rightarrow Z_m$, are not $-K_{Y_m}$-positive but they are $-K_{Y_m}$-trivial, and all other steps of the game take place in the Mori category. In this case we call it a {\it bad link}. See \cite[\S5.5]{CPR}.

\paragraph*{\bf Sarkisov decomposition and extremal 2-ray link}
Given two Mfs $X/Z$ and $X^\prime/Z^\prime$ and a birational map $\varphi\colon X\dashrightarrow X^\prime$, we briefly explain how this can be decomposed into Sarkisov links via extremal 2-ray links, and we refer to \cite{Corti-Sar, Corti1} for proofs. 

One starts with a very ample complete linear system \[\HH^\prime=|-\mu^\prime K_{X^\prime}+A^\prime|\] on $X^\prime$, where $A^\prime$ is the pull back of an ample divisor on $Z^\prime$. Then take the $\HH$ to be the birational transform of $\HH^\prime$ on $X$ via $\varphi$. Since $X/Z$ is a Mfs then there exists a positive rational number $\mu$ and a divisor $A$ on $Z$ such that
\[\HH\subset |-\mu K_{X}+A|\]
If $\varphi$ is not square biregular then either $K_X+\frac{1}{\mu}\HH$ is not canonical or it is canonical but not nef (\cite[Corollary~2.5]{Corti1}). 
If it is not canonical then it has a base locus - the so-called maximal centre - with high multiplicity with respect to $\mu$ (\cite[Definition~2.6]{Corti1}). Now the 2-ray game is played by blowing up this centre and running the 2-ray game. On the other hand, if $K_X+\frac{1}{\mu}\HH$ is canonical but not nef then the 2-ray game is played by considering a suitable contraction $S\rightarrow T$ (see \cite[Case~2, Page 274]{Corti1} and do as explained above. Corti proves that both these games are actually 2-ray links in our notation \cite[Theorem~2.7]{Corti1}.

\section{Birational rigidity via mobile cone}
In this section, we explain in details our method of proving birational rigidity (via exclusion) based on the results in Section\,\ref{MDS}.

\subsection{Exclusions}\label{ex-methods}
The attempt to describe the pliability of a Fano variety $X$ is essentially reduced to studying the maximal centres on $X$. This is because the contraction map $Z\rightarrow T$ does not exist as $Z\!=\!\{\text{point}\}$.
Traditionally to prove birational rigidity for a Fano $3$-fold $X$ one takes $\Gamma\subset X$ as a possible centre, $\Gamma$ being an irreducible reduced subscheme of codimension $>1$, and proves that it is not a maximal centre. For example, one of the exclusion methods widely used is the following key lemma.
 
\begin{lm}[Lemma 5.2.1 \cite{CPR}]\label{main-CPR} If $\Gamma\subset X$ is a maximal centre and $\varphi\colon(E\subset Y)\rightarrow(\Gamma\subset X)$ is a maximal extraction then $(-K_Y)^2\in\Int\NNE(Y)$.
\end{lm}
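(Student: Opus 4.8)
The plan is to use the two--ray structure of $\NNE(Y)$, available because $\rho(Y)=\rho(X)+1=2$, together with the Noether--Fano relation attached to a maximal singularity. Since $Y$ is projective and $\rho(Y)=2$, the cone $\NNE(Y)\subset\R^2$ is closed, pointed and spanned by exactly two extremal rays; one of them is $R_1:=\NNE(Y/X)=\R_{\geq0}[C_1]$, the ray contracted by $\varphi$ (with $C_1\subset E$ a general contracted curve), and I write $R_2=\R_{\geq0}[C_2]$ for the other. Because $\varphi$ contracts the prime divisor $E$ one has $E\cdot C_1<0$, and because $X$ is terminal, $K_Y=\varphi^*K_X+aE$ with $a>0$ the discrepancy of $E$; hence $-K_Y\cdot C_1=-a(E\cdot C_1)>0$, $-E$ is $\varphi$--ample, and — as $\varphi$ contracts $E$ onto the point $\Gamma$ — the restriction $-E|_E$ is ample on the surface $E$. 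It then suffices to write $(-K_Y)^2=\xi_1[C_1]+\xi_2[C_2]$ in $\N_1(Y)$ and to check $\xi_1,\xi_2>0$.

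I would then bring in the Noether--Fano data. As $\Gamma$ is a maximal centre there is a mobile system $\HH\subset|{-}\mu K_X|$ with $(X,\tfrac1\mu\HH)$ not canonical along $\Gamma$, and the maximality of the extraction $\varphi$ says $a(E;X,\tfrac1\mu\HH)<0$, i.e. $m:=\mult_E\HH>\mu a$. The strict transform $\HH_Y=\varphi^*\HH-mE$ is again mobile and, setting $\lambda:=m-\mu a>0$, one has $-K_Y=\tfrac1\mu\HH_Y+\tfrac\lambda\mu E$ in $\N^1(Y)_\Q$, so that
\[(-K_Y)^2=\tfrac1{\mu^2}\bigl(\HH_Y^2+2\lambda\,\HH_Y\cdot E+\lambda^2E^2\bigr).\]
Now I would treat the two contributions separately. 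First, the intersection $\HH_Y^2$ of two general members of $\HH_Y$ is a non--zero effective $1$--cycle, so $\HH_Y^2=\alpha[C_1]+\beta[C_2]$ with $\alpha,\beta\geq0$; since $\varphi$ contracts $E$ to a point, the projection formula gives $\HH_Y^2\cdot\varphi^*(-K_X)=\HH^2\cdot(-K_X)=\mu^2(-K_X)^3>0$, and as $\varphi^*(-K_X)$ is nef, non--zero and vanishes on $R_1$ it is strictly positive on $R_2$; therefore $\beta>0$. Secondly, restricting to $E$ and using that $\HH|_\Gamma$ is trivial because $\Gamma$ is a point gives $\HH_Y|_E\equiv-m\,E|_E$, hence $\HH_Y\cdot E=-mE^2$; moreover $-E^2$ is a non--zero effective $1$--cycle supported on $E$, so $-E^2=\gamma[C_1]$ with $\gamma>0$ (it is non--zero because $(-E)\cdot(-E^2)=(-E|_E)^2>0$). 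Plugging these in,
\[2\lambda\,\HH_Y\cdot E+\lambda^2E^2=\lambda(\lambda-2m)E^2=\lambda(m+\mu a)(-E^2)=\lambda(m+\mu a)\gamma\,[C_1],\]
so that $(-K_Y)^2=\tfrac1{\mu^2}\bigl(\alpha+\lambda(m+\mu a)\gamma\bigr)[C_1]+\tfrac{\beta}{\mu^2}[C_2]$ has both coefficients strictly positive, i.e. $(-K_Y)^2\in\Int\NNE(Y)$.

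The one point that needs care is that the computation is presented under the assumption $\dim\Gamma=0$, which is exactly the case relevant here since every extraction we use is a weighted blow--up of a quotient terminal point. When $\Gamma$ is a curve one must account for $\HH|_\Gamma\not\equiv0$, for the fact that $\varphi|_E\colon E\to\Gamma$ is a fibration (so $E^2$ need no longer lie on $R_1$) and for the codimension--two corrections in the projection--formula computations; this is handled, with more bookkeeping, using the degree bound $\deg_{-K_X}\Gamma<\mu^2(-K_X)^3/m^2$ obtained by intersecting $\HH^2$ with $\Gamma$. I expect this curve case to be the main technical obstacle; the point case is as clean as above.
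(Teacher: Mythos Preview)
Your argument is correct in the point case, and since all extractions used in the paper are Kawamata blow--ups of terminal quotient points, this is exactly the case that matters here. The computation is essentially the original one from \cite[Lemma~5.2.1]{CPR}: decompose $-K_Y$ as $\tfrac1\mu\HH_Y+\tfrac\lambda\mu E$ with $\lambda>0$ coming from the Noether--Fano inequality, and then check that each summand of $(-K_Y)^2$ contributes the right sign along the two extremal rays.

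The paper, however, does not argue this way. It derives Lemma~\ref{main-CPR} as a corollary of Theorem~\ref{idea-Fano}: if $\Gamma$ is a maximal centre then the Sarkisov link exists, hence $Y$ is a Mori dream space with $-K_Y\in\Int\MMob(Y)$, and the general implication ``$D\in\Int\MMob(Y)\Rightarrow D^2\in\Int\NNE(Y)$'' (see \cite[\S1.5]{BCZ}) finishes. So the paper's route goes through the existence of the Sarkisov link and the Mori dream space structure, whereas yours is a direct intersection--theoretic calculation that never invokes the Sarkisov program. Your approach is self--contained and closer in spirit to \cite{CPR}; the paper's approach is shorter once Theorem~\ref{idea-Fano} is in place and, more importantly, makes the logical point that Lemma~\ref{main-CPR} is strictly weaker than the mobility criterion, which is the whole methodological thrust of Section~\ref{ex-methods}.
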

The idea then is to find a nef divisor $M$ on $Y$ such that $M.(-K_Y)^2\leq 0$, hence 
\[(-K_Y)^2\notin\Int\NNE(Y)\]
This is often called the {\it test class method}. 

When the centre cannot be excluded, then proof follows by showing that any birational map $X\dashrightarrow X^\prime$ between the Fano $3$-fold $X$ and a Mori fibre space $X^\prime/Z^\prime$ has a Sarkisov decomposition where $X_1$ is isomorphic to $X$ but the map $\varphi_1\colon X\dashrightarrow X_1$ is a non-trivial birational self-map. Hence, any Sarkisov link from $X$ is a birational self-map of $X$. This is called the {\it untwisting method}. See for example \cite[\S4]{CPR}.\\

\paragraph*{\bf Mobile cone vs Mori cone for exclusion.} As explained above, one method of the proof of rigidity for Fano varieties, and more generally Mfs, follows by showing that $K^2$ does not belong to the interior of the Mori cone. Another way is the method of untwisting. However, computing the Mori cone is often rather a very difficult task, so is the untwisting, and a priori there is no systematic way of deciding which method to use.  Moreover, these methods do not produce much information about the geometry of the varieties. Here, we provide another method, dual to these, based on the study of the cone of effective divisors rather than effective 1-cycles, which is very systematic in nature. In particular we show the following theorem (c.f.\,\cite[\,Lemma\,2.8]{cAn}) for Fano varieties; for a more general statement see Theorem~\ref{main-theoretical}.

\begin{theorem}\label{idea-Fano} Let $X$ be a Fano variety. If $\Gamma\subset X$ is a maximal centre and $\varphi\colon(E\subset Y)\rightarrow(\Gamma\subset X)$ is a maximal extraction then $Y$ is a Mori dream space and $-K_Y\in\Int\MMob(Y)$.
\end{theorem}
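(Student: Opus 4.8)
The plan is to combine two facts, each of which is almost forced by the definitions: first, that a maximal extraction $Y \to X$ has Picard number two and sits at the start of a Sarkisov 2-ray game which, since $\Gamma$ is a maximal centre, \emph{terminates without error}; and second, that termination of that 2-ray game is exactly the Mori dream space condition of Lemma~\ref{2-ray-MDS}. For the first point I would invoke the discussion of ``Sarkisov decomposition and extremal 2-ray link'' above: because $\Gamma$ is a maximal centre, the map $K_X + \frac1\mu\HH$ is not canonical, the blow up of $\Gamma$ initiates an extremal 2-ray game, and Corti's theorem (\cite[Theorem~2.7]{Corti1}) guarantees it is a 2-ray link — i.e.\ it runs to completion with no errors. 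Here $\varphi\colon Y \to X$ being a \emph{maximal extraction} means precisely $Y \to X$ is the divisorial contraction initiating this game, so $\rank\Pic(Y) - \rank\Pic(X) = 1$, and since $X$ is Fano with $\rank\Pic(X) = 1$ we get $\rank\Pic(Y) = 2$, $\rank\Pic(Y/\{\mathrm{pt}\}) = 2$. Applying Lemma~\ref{2-ray-MDS} with $T$ a point then gives that the (relative = absolute) Cox ring of $Y$ is finitely generated, i.e.\ $Y$ is a Mori dream space.

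For the second assertion, $-K_Y \in \Int\MMob(Y)$, I would argue as follows. Write $\varphi^*(-K_X) = -K_Y + aE$ with $a > 0$ (discrepancy of the extraction, positive since $X$ is terminal and $\varphi$ is a genuine extraction of a centre of codimension $\geq 2$). Since $-K_X$ is ample on $X$, its pullback $-K_Y + aE = \varphi^*(-K_X)$ is nef and big on $Y$, and moreover semiample — it defines the contraction $\varphi$ itself — so it lies in $\Nef(Y) \subset \MMob(Y)$, indeed on the boundary facet of $\Nef(Y)$ corresponding to $\varphi$. The divisor class $-K_Y$ is obtained from this boundary class by moving \emph{into} the interior of the effective cone, away from the $E$-wall, because we subtract a positive multiple of the exceptional divisor $E$. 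Concretely, in the two-dimensional picture $\N^1(Y)\otimes\R \cong \R^2$, the cone $\overline{\Eff}(Y)$ is spanned by two rays, one of which is $\R_{\geq 0}[E]$; the class $\varphi^*(-K_X)$ lies on the \emph{opposite} boundary ray (the wall giving $\varphi$), and $-K_Y = \varphi^*(-K_X) - aE$ lies strictly between the $[E]$-ray and that wall, hence in the interior of $\overline{\Eff}(Y)$. Finally, since $Y$ is a Mori dream space its mobile cone is the union of the (finitely many) nef cones of the small $\Q$-factorial modifications of $Y$, and this union is a closed cone whose interior relative to $\overline{\Eff}(Y)$ agrees with $\Int\overline{\Eff}(Y)$ minus the union of the walls that are \emph{divisorial} contractions; so it suffices to check $-K_Y$ is not on such a divisorial wall. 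The only divisorial ray on the $E$-side is $[E]$ itself, and $-K_Y \neq [E]$ since $-K_Y$ is big (it pushes to $-K_X$, which is big on $X$), so $-K_Y$ cannot lie on that wall; and the 2-ray link terminating (rather than erroring) means the \emph{other} extreme of the game is either a divisorial contraction or a fibration — in either case $-K_Y$, being strictly interior to $\overline{\Eff}$, does not sit on that terminal wall either. Hence $-K_Y \in \Int\MMob(Y)$.

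The step I expect to be the main obstacle is making the identification ``$\MMob(Y) = $ union of nef cones of SQMs, and its interior is the interior of $\overline{\Eff}(Y)$ away from divisorial walls'' fully precise, and in particular verifying that $-K_Y$ genuinely avoids the two boundary walls rather than merely lying in the open effective cone. The $[E]$-wall is handled by bigness of $-K_Y$; the delicate case is the other wall, where one must rule out that $-K_Y$ is proportional to the class pulled back from the far end of the 2-ray link. I would handle this by tracking $-K_Y$ through the explicit chain of flips/flops of the 2-ray link: at each step $-K$ of the intermediate model transforms predictably, and the terminal model $Y_m \to Z_m$ has $\rank\Pic(Y_m/Z_m) = 1$, so $-K_{Y_m}$ being on that last wall would force $-K_{Y_m}$ to be pulled back from $Z_m$, contradicting that $X$ (and hence every birational model in the chain) has a big anticanonical class. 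This is the technical heart of the argument and is exactly where the ``necessary assumption on the singularities of the birational models'' alluded to in the introduction will enter in the general Theorem~\ref{main-theoretical}; for the Fano case stated here it is cleaner because the base $T$ is a point.
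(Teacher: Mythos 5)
Your first assertion ($Y$ is a Mori dream space) is argued exactly as in the paper: a maximal centre yields a Sarkisov link from the blow up, a Sarkisov link is in particular a 2-ray link, and Lemma~\ref{2-ray-MDS} converts this into finite generation of the Cox ring. That part is fine.

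The second assertion is where you have a genuine gap. Your argument rests on the claim that the interior of $\MMob(Y)$ is $\Int\overline{\Eff}(Y)$ minus the divisorial walls, so that it suffices to show $-K_Y$ is interior to the effective cone and not proportional to a divisorially contracted class. This is false: in the Mori chamber decomposition of $\overline{\Eff}(Y)$ the mobile cone is in general a \emph{proper} subcone whose complement contains full-dimensional chambers (those corresponding to models on which a divisor is contracted, e.g.\ the whole chamber between $[E]$ and $\varphi^*\Amp(X)$, and symmetrically at the far end when the link terminates in a divisorial contraction). Consequently a class can be big, lie in $\Int\overline{\Eff}(Y)$, avoid every divisorial ray, and still sit on or beyond the far boundary of $\MMob(Y)$. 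The paper's own computations furnish explicit instances: for the special $X_5\subset\PP(1,1,1,1,2)$ one has $\MMob(Y)=\left<(2,1),(1,0)\right>$ with $-K_Y$ of class $(1,0)$, which is big (it is the pullback of an ample class under a divisorial contraction to a 3-fold, so your ``not pulled back from a lower-dimensional $Z_m$'' test is vacuous there) and interior to $\overline{\Eff}(Y)$, yet lies on $\partial\MMob(Y)$. The ingredient you never use, and which is the actual content of the statement, is that a Sarkisov link takes place in the Mori category: every extremal contraction in the chain, in particular the last map $\psi_m\colon Y_m\rightarrow Z_m$, must be $K$-negative, i.e.\ $-K_{Y_m}\cdot C>0$ for the contracted curves $C$; since all intermediate steps are isomorphisms in codimension one, $-K_{Y_m}$ is identified with $-K_Y$ in $\N^1$, and this positivity against the extremal ray dual to the far wall is precisely what forces $-K_Y$ strictly inside $\MMob(Y)$. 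The paper's proof is the contrapositive of exactly this observation; without invoking the Mori-category condition the conclusion simply does not follow.
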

\begin{proof} Suppose $\Gamma$ is a maximal centre, that implies $Y$ has a 2-ray link by the existence of the Sarkisov link \cite{Corti-Sar}. It follows from Lemma\,\ref{2-ray-MDS} that $Y$ is a Mori dream space. Now suppose that  $-K_Y\notin\Int\MMob(Y)$. This means that for any curve $C$ contracted at the last step of the 2-ray game, $-K_Y.C\leq 0$, which implies that the last map is not an extremal Mori contraction, hence the game goes out of the category.
\end{proof}

\begin{rmk}Lemma~\ref{main-CPR} follows from Theorem~\ref{idea-Fano}. This is because if $-K_Y\in\Int\MMob(Y)$, then $(-K_Y)^2\in\Int\NNE(Y)$. See for example \cite[\S1.5]{BCZ}. Hence, the method based on Lemma\,\ref{main-CPR} is a weaker method than the one by Theorem\,\ref{idea-Fano} for excluding maximal centres.
\end{rmk}

%The $K^2$ condition is used by finding a nef divisor $D$ for which $D.K^2\leq 0$, hence the implication that $K^2\notin\Int\NNE$. In fact, if the Cox ring of $Y$ is finitely generated then there are finitely many models, $Y_1,\cdots,Y_k$, isomorphic to $Y$ in codimension one and the same proof as the one for the remark above shows that if $K^2_i\notin\Int\NNE(Y_i)$ for any $i=1,\dots,k$ then $-K_Y\notin\Int\MMob(Y)$. This proves the following.

%\begin{theorem}[\bf Generalised test class method] Let $X$ be a $\Q$-factorial terminal Fano variety with Picard number one. Suppose $\varphi:Y\rightarrow X$ is an extremal extraction of a centre $\Gamma$ in $X$. If there exists a mobile divisor $D$ in $Y$ for which $K^2.D\leq 0$, then $\varphi$ cannot initiate a Sarkisov link. In other words, $\Gamma$ together with this extraction is excluded.
%\end{theorem}

\subsection{Toric embedding and 2-ray game}\label{2-ray}

Suppose $T$ is a toric variety with $\rank\Pic(T)=2$. Then it admits a dream 2-ray game (over $S=\{\text{point}\}$). Here we briefly explain how this game is played, and refer to \cite{BZ} for details.

The Cox ring of a rank 2 toric variety is of the form
\[\left(\begin{array}{ccccccc}
\x_0&\x_1&\x_2&\cdots&\x_{n-2}&\x_{n-1}&\x_n\\
a_0&a_1&a_2&\cdots&a_{n-2}&a_{n-1}&a_n\\
b_0&b_1&b_2&\cdots&b_{n-2}&b_{n-1}&b_n
\end{array}\right)\]
where the first row indicates the variables and the two other rows correspond to the weights of these variables. Note that the integers $a_i$ and $b_j$ are so that 1-dimensional cones (of the toric fan) corresponding to the variables $x_i$ span $\Z^2$, each cone is convex and extremal, and moreover \[d_A=\gcd(\text{ all nonzero determinant of } 2\times 2\text{ minors})\neq 1\]
furthermore assume that the variables, from left to right, are in order so that the GIT chamber of this toric variety is of the following form:
\[\xygraph{
%{<0cm,0cm>;<1cm,0cm>:<0cm,1cm>::}
!{(0,0) }="a"
!{(-1.2,1.8) }*+{(a_0,b_0)}="f"
!{(-0.3,1.5) }*+{}="e"
!{(0.3,1.3) }*+{}="y"
!{(0.4,0.7) }*+{}="k"
!{(0.8,0.8) }*+{\ddots}="z"
!{(0.7,0.4) }*+{}="l"
!{(1.5,0.3) }*+{}="x"
!{(1.8,-0.3) }*+{}="b"
!{(2,-1.2) }*+{(a_n,b_n)}="d"
"a"-"b"  "a"-"e" "a"-"f" "a"-"d" "a"-"x" "a"-"y" "a"-"z" "a"-"k" "a"-"l"
}  \]

Denote by $D_i$ the divisor $(\x_i=0)$ in $T$. We have that
\begin{enumerate}[(i)]
\item $\Mob(T)=\left<D_1,D_{n-1}\right>$, if $\frac{a_0}{b_0}\neq\frac{a_1}{b_1}$ and $\frac{a_{n-1}}{b_{n-1}}\neq\frac{a_n}{b_n}$,
\item $\Mob(T)=\left<D_0,D_{n-1}\right>$, if $\frac{a_0}{b_0}=\frac{a_1}{b_1}$ and $\frac{a_{n-1}}{b_{n-1}}\neq\frac{a_n}{b_n}$,
\item $\Mob(T)=\left<D_0,D_n\right>$, if $\frac{a_0}{b_0}=\frac{a_1}{b_1}$ and $\frac{a_{n-1}}{b_{n-1}}=\frac{a_n}{b_n}$.
\end{enumerate}
Moreover, $\MMob(T)$ decomposes into a finite number of subcones
\[\MMob(T)=\bigcup_{i=1}^{k}\Nef(T_i)\]
where the $\Nef(T_i)$ are in order so that the ample cone of $T$ is $\Amp(T)=\Int(\Nef(T))$.
In this notation $T_i$ are the varieties isomorphic to $T$ in codimension 1. They are obtained by the variation of GIT, as follows: the game is played by choosing a character in each chamber at each step, going clockwise. The first map, either a fibration (when $a_0/b_0=a_1/b_1$) or a contraction (when $a_0/b_0\neq a_1/b_1$), is when the character is chosen to be $a_1/b_1$. Same happens at the far end of the game for $a_n/b_n$.  The variation of GIT then dictates the divisorial 2-ray game. For example passing above the ray of $\x_i$ we have a FLIP. If $k$ is the number of variables that lie on the ray as $\x_i$ then the base of this FLIP has dimension $k-1$. The number of variables before, and respectively after, $\x_i$ indicate the contracted loci before, and respectively after, the FLIP. For instance, if $k=2$ and there are 3 variables before this ray and 4 after then a 3-dimensional locus on the left hand side contracts (as fibration) to a line and a 4-dimensional locus contracts, to the same line, on the right hand side.

\begin{rmk}\label{toric-antiflip} Note that even if we start the 2-ray game from a toric Mori fibre space, that is by construction a 2-ray link, there is no guarantee that it is a Sarkisov link even if $-K_T\in\Int\Mob(T)$. For example, consider the toric variety $T=\Proj_{\PP^1}\mathcal{E}$, where $\mathcal{E}=\mathcal{O}_{\PP^1}\oplus\mathcal{O}_{\PP^1}(2)\oplus\mathcal{O}_{\PP^1}(2)$. The Cox ring of $T$ is
\[\left(\begin{array}{ccccccc}
0&0&1&1&1\\
1&1&0&-2&-2
\end{array}\right)\]
and the 2-ray game starts from a $\PP^2$ fibration over $\PP^1$, and exhibits a non-terminal anti-flip to a $\PP(1,1,2)$ fibration over $\PP^1$, which does not take place in the Mori category. Note that the mobile cone is generated, in $\Q^2$, by the classes $(0,1)$ and $(1,-2)$, which clearly includes $-K_T$ in the class $(3,-2)$ as an interior point. In fact we have the following theorem.
\end{rmk}

\begin{theorem}\label{main-theoretical} For a Mori fibre space $X/S$, the 2-ray game played on $Y/T$, where $Y$ and $T$ are constructed as in\,\ref{2-ray-game}, is a Sarkisov link if and only if the relative Cox ring of $Y$, over $T$, is finitely generated, $-K_{Y/T}\in\Int\MMob(Y/T)$ and all the anti-flips in the 2-ray game are terminal.
\end{theorem}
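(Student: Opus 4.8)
The plan is to prove the two implications separately. In both directions the finite generation hypothesis is dealt with by Lemma~\ref{2-ray-MDS}: the relative Cox ring of $Y$ over $T$ is finitely generated exactly when the 2-ray game on $Y/T$ terminates with no errors, i.e.\ is a 2-ray link. So I may assume throughout that the game runs through a finite chain of $\Q$-factorial varieties $Y=Y_0,Y_1,\dots,Y_m$, with small modifications $Y_i\dashrightarrow Y_{i+1}$ in the middle and extremal contractions $Y\to X$ (or $Y\to T$) and $\psi_m\colon Y_m\to Z_m$ at the two ends. What remains is to match the requirement ``every step lies in the Mori category'' against the two conditions $-K_{Y/T}\in\Int\MMob(Y/T)$ and terminality of all anti-flips.

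The geometric heart of the argument is the shape of $\MMob(Y/T)$. Since $\rank\Pic(Y/T)=2$ this cone is two-dimensional, and by the 2-ray link structure (as in the proof of Lemma~\ref{2-ray-MDS}) it decomposes as $\bigcup_{i=0}^m\Nef(Y_i/T)$ with the chambers in order. Hence its two bounding rays are the outer edge of $\Nef(Y_0/T)=\Nef(Y/T)$, dual to the class $[C^-]$ of a curve contracted by the first extremal contraction $Y\to X$, and the outer edge of $\Nef(Y_m/T)$, dual to the class $[C^+]$ of a curve contracted by $\psi_m$. As $Y_0$ and $Y_m$ are isomorphic in codimension one, this identifies $\Int\MMob(Y/T)$ with $\{D : D\cdot[C^-]>0\text{ and }D\cdot[C^+]>0\}$. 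Moreover $-K_{Y/T}\cdot[C^-]>0$ is automatic: in case (1) of \S\ref{2-ray-game}, $\varphi\colon Y\to X$ is a divisorial blow-up, so $K_Y=\varphi^*K_X+aE$ with discrepancy $a>0$ and $K_Y\cdot C^-=aE\cdot C^-<0$ on a curve $C^-$ in a $\varphi$-fibre; in case (2), $-K_Y$ is relatively ample over $S$. Therefore $-K_{Y/T}\in\Int\MMob(Y/T)$ is equivalent to $-K_{Y_m}\cdot[C^+]>0$, i.e.\ to the last contraction $\psi_m$ being $K$-negative on the locus it contracts, rather than merely $K$-trivial.

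With this the equivalence is essentially formal. If the game is a Sarkisov link, then all its steps lie in the Mori category; in particular every intermediate $Y_i$ is $\Q$-factorial terminal, so all anti-flips are terminal, and $\psi_m$ is an extremal Mori contraction (divisorial or of fibre type), hence $K$-negative, so $-K_{Y/T}\in\Int\MMob(Y/T)$ by the previous paragraph, while finite generation holds by Lemma~\ref{2-ray-MDS}. Conversely, assume the three conditions. The game is a 2-ray link; its first step lies in the Mori category by the construction of $Y/T$ in \S\ref{2-ray-game}; and each intermediate FLIP $Y_i\dashrightarrow Y_{i+1}$ is a flip, a flop or an anti-flip. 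Flips and flops preserve terminality of $3$-folds and the anti-flips are terminal by hypothesis, so inductively every $Y_i$ is $\Q$-factorial terminal and each middle step lies in the Mori category. Finally, by the previous paragraph $-K_{Y_m}\cdot[C^+]>0$, so $\psi_m$ is a $K$-negative contraction of an extremal ray on a $\Q$-factorial terminal $3$-fold, i.e.\ either a divisorial contraction in the Mori category or a Mori fibre space structure; in particular the game is not a bad link. Hence every step lies in the Mori category and the game is a Sarkisov link.

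The step I expect to be the main obstacle is the computation in the second paragraph: identifying the bounding rays of $\MMob(Y/T)$ with the duals of $[C^-]$ and $[C^+]$, and observing that positivity of $-K_{Y/T}$ on $[C^-]$ is forced by the way $Y$ is produced. This is exactly what converts the abstract demand ``the final contraction is $K$-negative, not merely $K$-trivial'' --- whose failure is the bad-link / non-terminal anti-flip phenomenon of Remark~\ref{toric-antiflip} --- into the single containment $-K_{Y/T}\in\Int\MMob(Y/T)$. One should also be careful with the degenerate case $m=0$, where the first move of the game is already divisorial or a fibration, and with the two possible forms of the initial datum $Y/T$ in \S\ref{2-ray-game}.
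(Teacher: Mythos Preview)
Your proposal is correct and follows the same route as the paper's own proof, only spelled out in considerably more detail: the paper's argument is essentially ``anti-flips, flops and flips occur in that order; flips and flops stay in the Mori category by MMP, so the terminality hypothesis on anti-flips handles the rest; the remaining statement is Theorem~\ref{idea-Fano} in the relative setting.'' Your identification of the two boundary rays of $\MMob(Y/T)$ with the duals of $[C^-]$ and $[C^+]$, and the verification that $-K_{Y/T}\cdot[C^-]>0$ is automatic in both constructions of \S\ref{2-ray-game}, is exactly the content the paper leaves implicit when it says ``follows precisely the proof of Theorem~\ref{idea-Fano}.''
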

\begin{proof} Note that in the process of 2-ray game anti-flips, flops and flips can all occur, in that order. However, it follows from the rules of minimal model program that if the variety before flip (or flop) is in the Mori category then the variety after the flip is also in the category. So the condition on the anti-flips (being terminal) controls the case where anti-flip can go out of the Mori category, as for example in Remark\,\ref{toric-antiflip}. The rest of the proof follows precisely the proof of Theorem\,\ref{idea-Fano} in the relative set up.
\end{proof}

\begin{cor} Let $X/S$ be a 3-fold Mori fibre space with $\rank\Pic(X)=2$. Then it is birationally super-rigid if either it is not a Mori dream space or it is a Mori dream space and
\begin{enumerate}
\item $-K_X\notin\Int\overline{\Mob(X)}$, and
\item for any extremal divisorial extraction $\widetilde{X}$ of $X$, at a centre $C\subset X$, either $\widetilde{X}/S$ is not a Mori dream space or it is a Mori dream space with $-K_{\widetilde{X}/S}\notin\Int\overline{\Mob(\widetilde{X}/S)}$.
\end{enumerate}
\end{cor}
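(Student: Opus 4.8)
The plan is to rule out the existence of \emph{any} Sarkisov link starting from $X/S$; birational super-rigidity then drops out of the Sarkisov program, in exactly the same way as in the proof of Theorem~\ref{idea-Fano} but now in the relative setting. Recall from Definition~\ref{pliability} that we must verify both that $\mathcal{P}(X/S)$ is a single point and that $\Bir(X)=\Aut(X)$. So let $f\colon X\dashrightarrow X'$ be a birational map to a Mori fibre space $X'/Z'$; if $f$ is not square biregular, the Sarkisov program produces a decomposition whose first link $\varphi_1$ is a Sarkisov link emanating from $X/S$, and by Definition~\ref{link} together with the construction in \S\ref{2-ray-game} this link is realised as a finite extremal $2$-ray game on a pair $(Y,T)$ built from $X/S$ in one of exactly two ways: \textbf{(1)} $T=S$ and $\varphi\colon Y\to X$ is a divisorial blow-up in the Mori category, i.e. $Y$ is an extremal divisorial extraction $\widetilde{X}$ of $X$ at a centre $C\subset X$; or \textbf{(2)} $Y=X$ and $T$ is the image of one step of the MMP run on $S$. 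I would derive a contradiction in each case.

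Case \textbf{(2)}. Since $\rank\Pic(Y)-\rank\Pic(T)=2$ and $\rank\Pic(Y)=\rank\Pic(X)=2$, we get $\rank\Pic(T)=0$, so $T$ is a point and the $2$-ray game is played on $X$ itself over a point. Then the relative Cox ring of $Y$ over $T$ is simply the Cox ring of $X$, and $\MMob(Y/T)=\MMob(X)=\overline{\Mob(X)}$, $-K_{Y/T}=-K_X$. By Theorem~\ref{main-theoretical}, for the game to be a Sarkisov link the Cox ring of $X$ must be finitely generated --- equivalently, by Lemma~\ref{2-ray-MDS}, $X$ is a Mori dream space --- and $-K_X\in\Int\overline{\Mob(X)}$. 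But by hypothesis either $X$ is not a Mori dream space, or it is one and condition~(1) gives $-K_X\notin\Int\overline{\Mob(X)}$; in either case a requirement of Theorem~\ref{main-theoretical} fails, so no link of type~(2) exists.

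Case \textbf{(1)}. Here $Y/T=\widetilde{X}/S$ for an extremal divisorial extraction $\widetilde{X}$ of $X$ at a centre $C\subset X$, and Theorem~\ref{main-theoretical} says the $2$-ray game is a Sarkisov link only if the relative Cox ring of $\widetilde{X}$ over $S$ is finitely generated --- i.e. $\widetilde{X}/S$ is a Mori dream space --- and $-K_{\widetilde{X}/S}\in\Int\overline{\Mob(\widetilde{X}/S)}$. This is precisely what condition~(2) of the statement forbids for \emph{every} such $\widetilde{X}$. Hence no link of type~(1) exists either. (Only the necessity direction of Theorem~\ref{main-theoretical} is used, so the auxiliary ``terminal anti-flips'' clause plays no role.) Combining the two cases, no Sarkisov link starts from $X/S$, so the hypothetical first link $\varphi_1$ cannot exist; therefore $f$ must have been square biregular. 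Applying this with arbitrary $X'/Z'$ shows every Mori fibre space birational to $X$ is square birational to $X/S$, so $\mathcal{P}(X/S)$ is a singleton; applying it with $X'=X$ shows every $f\in\Bir(X)$ is square biregular, hence biregular, hence lies in $\Aut(X)$. Thus $X/S$ is birationally super-rigid.

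The delicate point --- and the main obstacle --- is the reduction made in the first paragraph: one must be certain that a birational map between Mori fibre spaces which is not square biregular genuinely yields a \emph{first} Sarkisov link attached to $X/S$ of one of the two listed shapes (the four types of the Sarkisov classification collapse to these two precisely because $\rank\Pic(X)=2$), and, in case~(2), that the Picard-rank count really forces $T$ to be a point rather than some intermediate fibration. Both are consequences of the structure of the Sarkisov decomposition recalled in \S\ref{MDS}, but making the bookkeeping airtight --- in particular ruling out that a link could blow up a centre lying on the base or perform a base change to a positive-dimensional $T$ --- is where the real care is needed.
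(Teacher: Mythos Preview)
Your argument is correct and is exactly the intended one: the paper gives no explicit proof of this corollary, treating it as an immediate consequence of Theorem~\ref{main-theoretical}, and your write-up simply unpacks that implication by running through the two possible shapes of a first Sarkisov link and invoking the necessity direction of Theorem~\ref{main-theoretical} in each. The Picard-rank count forcing $T$ to be a point in case~(2), and the use of the paper's convention that ``square biregular'' already entails biregular, are both handled correctly; the closing caveat about bookkeeping is cautious but not a genuine gap.
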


\begin{defi}\label{follow} Let $T$ and $Y$ be $\Q$-factorial projective varieties with Picard number $2$, and suppose $Y\subset T$. We say that {\it $Y$ 2-ray follows $T$} if the 2-ray game of $T$ restricts to a dream 2-ray game on $Y$.
\end{defi}

\begin{rmk} By Lemma\,\ref{2-ray-MDS}, if $Y$ is a Mori dream space with $\rank\Pic(Y)=2$, then it admits a 2-ray link. In this case the game is the restriction of the 2-ray game of a toric variety to $Y$, c.f. \cite[Proposition~2.11]{hu}.

It follows that if $Y$ is a Mori dream space with finitely generated Cox ring 
\[\Cox(Y)=\frac{\C[x_1,\dots,x_n]}{I}\]
 then $Y$ 2-ray follows $T$, where $T$ is the toric variety with $\Cox(T)=\C[x_1,\dots,x_n]$, with the same grading as $Y$, such that $Y$ is equivariantly embedded in $T$. This follows from \cite[Proposition~2.11]{hu} and its proof.
\end{rmk}

\subsection{Unprojection and generators of the Cox ring}
Let us describe how we will use the methods introduced above. We start by a Fano 3-fold  $X$ embedded (anticanonically) in a weighted projective space $\PP$. The aim is to exclude a point $p\in X$, with quotient terminal singularity. We construct a toric blow up $T\rightarrow \PP$ centred at $p$, and consider $Y$, the proper transform of $X$ under the blow up. The construction of $T$ is done in such way that $Y\rightarrow X$ is an extremal blow up of $X$ at $p$. For the purpose of the applications in this article we need only  the classification of extremal extractions (in dimension 3) due to Kawamata\,\cite{kawamata}. For other types of terminal singularities, i.e.\,non-quotient, one can use the results of Kawakita\,\cite{kawa1,kawa2,kawa3,kawa4}. First note that $-K_Y$ can be computed immediately using adjunction formula. Then we run the 2-ray game of $T$ and restrict it to $Y$. If $Y$ 2-ray follows $T$, then we have a full description of its Cox ring, and in particular its mobile cone. A simple check will tell us whether $-K_Y$ is interior or not. If $Y$ does not 2-ray follow $T$, then by means of unprojection (of $Y$ away from a ``fake'' divisor -- a divisor that lies entirely in the unstable locus of the GIT quotient) we embed $Y$ in a bigger toric variety $T'$ (in higher codimension), by adding generators to the Cox ring. Then we do the 2-ray game on $T'$ and so on. No matter if this process terminates with a suitable toric embedding or not we are successful to show that $-K_Y$ is not mobile in all cases, using a few technical tools that we introduce later on. Hence we obtain the desired exclusion.

Let us recall Kwamata's description of extremal blow ups.

\subsubsection{Kawamata blow up}\label{kawamata} See \cite{kawamata}. Suppose that $\varphi\colon Y\rightarrow X$ is a divisorial contraction to a centre $\Gamma\subset X$, and suppose $p\in X$ is a terminal quotient singularity with the germ $\frac{1}{r}(1,a,r-a)\text{ such that }r\geq 2\text{ and }a\text{ is coprime to }r$.
If $p\in\Gamma$ then $\Gamma=\{p\}$ and $\varphi$ is the weighted blow up with weights $(1,a,r-a)$ and we have that
\[K_Y=\varphi^*(K_X)+\frac{1}{r}E\]
where the irreducible scheme $E\simeq\PP(1,a,r-a)$ is the exceptional divisor of the blow up. We call this map the {\it Kawamata blow up} of $X$ at $p$.

\subsubsection{Construction of the toric blow up}\label{toric-method}
We explain here how the toric variety $T$, the blow up of the weighted projective space, is constructed. We do this for the case of a Fano hypersurface (of Iano-Fletcher--Reid~\cite{fle}). The extension to higher codimensions is very similar and the differences will be explained later, once they appear.

We roughly follow the notation of \cite{CPR} to ease reading. We denote a Fano hypersurface \[X_d\colon(f=0)\subset\PP(1,a_1,a_2,a_3,a_4)\] where $a_1, a_2, a_3, ,a_4$ are positive integers and $d$ is the degree of $f$, the polynomial that defines $X$, such that this description is well formed, see~\cite[Theorem~14.1]{fle}. The variables of $\PP(1,a_1,a_2,a_3,a_4)$ will be denoted by $x,y,z,t,s$ when all $a_i$ are known and $1\leq a_1\leq \dots\leq a_4$. When the values of $a_i$ are not specified, we use $x_0,\dots,x_4$ to denote the variables.

\begin{enumerate}[(1)]
\item Start by a concrete homogeneous polynomial $f$, of degree $d$, defining the Fano $3$-fold $X$ as a hypersurface in the weighted projective space $\PP=\PP(1,a_1,a_2,a_3,a_4)$. Quasi-smoothness implies that $f$ imposes no singularities on $X$.
\item Consider a singular point $p\in X$, a quotient terminal singularity, that is of type $1/a_4(a,a_4-a,1)$, where $a_4$ and $a$ are two coprime integers, with $a_4>a$. Note that we have assumed this corresponds to the point $p_4\!=\!(0\!:\!0\!:\!0\!:\!0\!:\!1)$, and that here we have no assumptions on the order of $a_i$. In order to fulfill the assumption of quasi-smoothness at this point, the polynomial $f$ must include a monomial of type $x_4^kL$, where $k$ is a positive integer and $L$ is a linear form in some other variables. Without loss of generality, assume that $x_4^kx_3\in f$. We will call $x_3$ the {\it tangent variable} at $p$ and $x_4$ the {\it nonvanishing variable} at $p$.

\item Consider the variety with the Cox ring $\C[u,\x_4,\x_3,\x_2,\x_1,\x_0]$ of rank two with irrelevant ideal $I=(u,\x_4)\cap(\x_0,\dots,\x_3)$ and the grading
\[\left(\begin{array}{cccccc}
u&\x_4&\x_3&\x_2&\x_1&\x_0\\
0&a_4&a_3&a_2&a_1&1\\
-a_4&0&\alpha&a&a_4-a&1\end{array}\right)\]
For different choices of $\alpha$, congruent to $a_3 \mod a_4$, we get a Deligne-Mumford toric stack, whose well-formed toric model, $T$, is easy to obtain by subtracting the first row from the second and then dividing the (new) second row by $-a_4$; see \cite{dP4} for details and general recipe. Note that the weight of the variable $\x_4$, for instance, becomes $(a_4,1)$. 

\item For the choice of the character $(a_4,1)$ (in the new matrix) in the variation of the Geometric Invariant Theory (GIT), there is a birational morphism $\Phi\colon T\rightarrow\PP$, that is the weighted blow up of the point $p_4\in\PP$. This, equivalently, can be seen as the map given by the full Riemann-Roch space of the linear system $|\mathcal{O}_T(n(a_4,1))|$, for a large enough $n$. 

Note that, the variety $Y$, corresponding to the birational transform of $X$ under $\Phi$, is a weighted blow up of $p_4\in X$. This map is, however, the Kawamata blow up of this point for a unique choices of $\alpha$, as we see in Lemma~\ref{alpha}. 
\end{enumerate}

In the notation above, suppose $f=x_4^kx_3+g$. The blow up $\Phi\colon T\rightarrow \PP$ is given in coordinates by
\begin{equation}\label{blow-up-map}(u,\x_4,\x_3,\x_2,\x_1,\x_0)\stackrel{\Phi}{\mapsto}(u^\frac{1}{a_4}\x_0:u^\frac{a_4-a}{a_4}\x_1:u^\frac{a}{a_4}\x_2:u^\frac{\alpha}{a_4}\x_3:\x_4)=(x_0:x_1:x_2:x_3:x_4)\end{equation}

Note that the fractional powers do not make this map nonalgebraic; for an argument on this see~\cite{yarek}. Suppose
\[F=\Phi^*(f)\qquad\text{ and }\qquad G=\Phi^*(g)\]
and denote by $m_f$ and $m_g$ the maximal rational numbers for which
\[F=u^{m_f}\overline{f}\quad\text{ and }\quad G=u^{m_g}\overline{g}\quad\text{such that}\quad \overline{f},\overline{g}\in\C[u,\x_0,\dots,\x_4]\]
such that $\overline{f}$ and $\overline{g}$ are polynomials.

\begin{lm}\label{alpha} In the notation above, $\alpha=a_4m_g$ if and only if $Y\rightarrow X$ is the Kawamata blow up of $X$ at $p$.
\end{lm}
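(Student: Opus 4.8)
The plan is to compute the discrepancy of the weighted blow-up $Y\to X$ directly from the toric construction and compare it with Kawamata's formula from \S\ref{kawamata}, which pins down the unique admissible extraction. First I would record that the ambient toric blow-up $\Phi\colon T\to\PP$ has exceptional divisor $(u=0)$ with a known discrepancy over $\PP$, computed from the weights in the grading matrix: after the normalisation described in step (3) the divisor $\x_4=0$ carries weight $(a_4,1)$, and the standard toric computation gives $K_T=\Phi^*K_\PP+\big(\tfrac{1}{a_4}(a+(a_4-a)+\alpha+1)-1\big)(u=0)$, i.e.\ the discrepancy of $(u=0)$ over $\PP$ equals $\tfrac{\alpha+1}{a_4}$ (using $a+(a_4-a)=a_4$). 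This is the elementary toric half of the argument.

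Next I would pass from $\PP$ to the hypersurface by adjunction. Since $Y\subset T$ is the proper transform of $X\subset\PP$, and $X$ is cut out by $f$ of degree $d$, while $Y=(\overline f=0)\subset T$ with $F=\Phi^*f=u^{m_f}\overline f$, the class of $Y$ in $T$ is $\Phi^*(X)-m_f(u=0)$. Applying adjunction on both $T\supset Y$ and $\PP\supset X$ and subtracting, the discrepancy of $E=(u=0)|_Y$ over $X$ comes out as
\[
a(E,X)=\frac{\alpha+1}{a_4}-m_f.
\]
I would then observe that $m_f=\min(m_{x_4^kx_3},\,m_g)$, where $m_{x_4^kx_3}$ is the $u$-order of $\Phi^*(x_4^kx_3)$; from the coordinate description \eqref{blow-up-map} one has $\Phi^*(x_4^kx_3)=u^{\alpha/a_4}\x_4^k\x_3$, so $m_{x_4^kx_3}=\alpha/a_4$, and hence $m_f=\min(\alpha/a_4,\,m_g)$.

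Now the comparison with Kawamata's theorem: by \S\ref{kawamata}, $Y\to X$ is the Kawamata blow-up of the terminal quotient point $p$ of type $\tfrac1{a_4}(a,a_4-a,1)$ exactly when it is the weighted blow-up with weights $(a,a_4-a,1)$ and discrepancy $\tfrac1{a_4}$. The weights $(a,a_4-a,1)$ on the coordinates $(x_1,x_2,x_3)$ transverse to $p$ are precisely what \eqref{blow-up-map} produces for the variables $\x_1,\x_2,\x_3$ (with $\x_3$ getting weight $\alpha/a_4$, which must equal $1/a_4$ only after we arrange $\x_3$ to be the weight-one direction — here I would be careful: the relevant normalisation is that the \emph{restricted} weight on the tangent variable is what controls things, and the condition $\alpha=a_4 m_g$ is exactly the statement that the monomial $x_4^kx_3$ does not strictly dominate $g$ in $u$-order, so that $m_f=m_g$). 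Setting $a(E,X)=\tfrac1{a_4}$ in the displayed formula gives $\tfrac{\alpha+1}{a_4}-m_f=\tfrac1{a_4}$, i.e.\ $m_f=\tfrac{\alpha}{a_4}$, which holds iff $\alpha/a_4\le m_g$, and combined with the congruence $\alpha\equiv a_3\pmod{a_4}$ that fixes $\alpha$ among its residues, this forces $\alpha=a_4 m_g$ (one checks $m_g a_4$ is the correct residue because $g$ has the same degree $d$ as $f$ and $x_4^kx_3$ does). Conversely, if $\alpha=a_4 m_g$ then $m_f=m_g=\alpha/a_4$ and the discrepancy is $\tfrac1{a_4}$, so by uniqueness of the terminal extraction (Kawamata) the map is the Kawamata blow-up.

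The main obstacle I expect is the bookkeeping around $m_f$ versus $m_g$ and the role of the tangent monomial $x_4^kx_3$: one must show that choosing $\alpha$ too large (so that $\alpha/a_4>m_g$) makes $x_4^kx_3$ the $u$-lowest-order term, which changes $m_f$ and hence the discrepancy, producing either a non-minimal or non-terminal extraction rather than the Kawamata one; and one must confirm that $\alpha=a_4m_g$ is genuinely compatible with the congruence class $\alpha\equiv a_3\pmod{a_4}$ forced by well-formedness of the stack in step (3). Everything else is a routine adjunction-and-toric-discrepancy computation, and the uniqueness of the answer is guaranteed by Kawamata's classification quoted in \S\ref{kawamata}.
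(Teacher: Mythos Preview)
Your approach is essentially the paper's: compute the discrepancy of $E\subset Y$ over $X$ by combining the toric discrepancy for $T\to\PP$ with adjunction for the hypersurface, arriving at
\[
K_Y-\varphi^*K_X=\Big(\frac{\alpha}{a_4}-m_f+\frac{1}{a_4}\Big)E,
\]
and then compare with Kawamata's value $1/a_4$. Your observation $m_f=\min(\alpha/a_4,\,m_g)$ from $\Phi^*f=u^{\alpha/a_4}\x_4^k\x_3+u^{m_g}\overline g$ is exactly what the paper uses, and your ``if'' direction ($\alpha=a_4m_g\Rightarrow$ discrepancy $1/a_4$, hence Kawamata by uniqueness) is correct.

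The gap is in the ``only if'' direction, specifically the case $\alpha/a_4<m_g$. There $m_f=\alpha/a_4$ and the discrepancy is \emph{still} $1/a_4$, so discrepancy alone does not exclude these smaller $\alpha$. Your congruence step (``$\alpha\equiv a_3\pmod{a_4}$ fixes $\alpha$ among its residues, this forces $\alpha=a_4m_g$'') does not close this: there can be several admissible residues $\alpha\le a_4m_g$, all giving discrepancy $1/a_4$. Your obstacle paragraph only flags ``$\alpha$ too large''. The paper disposes of $\alpha/a_4<m_g$ by a different remark: in that regime $m_f<m_g$, and then the exceptional divisor $E$ is not irreducible, so the extraction cannot be the Kawamata one (whose exceptional divisor is the irreducible $\PP(1,a,r-a)$). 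You should replace the congruence argument by this irreducibility observation, or give another reason why $\alpha/a_4<m_g$ fails to produce the Kawamata extraction.
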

\begin{proof} It is easy to compute $-K_T$ and $-K_\PP$ using toric formulae. Then by adjunction, one can compute $-K_Y$ and $-K_X$ and, using the map~(\ref{blow-up-map}), deduce 
\[K_Y-\varphi^*(K_X)=(\frac{\alpha}{a_4}-m_f+\frac{1}{a_4})E\]
Clearly this is the Kawamata blow up if and only if $m_f=\alpha/a_4$. On the other hand 
\[\Phi^*(f)=u^\frac{\alpha}{a_4}\x_4^k\x_3+u^{m_g}\overline{g}\]
Clearly we get the Kawamata blow up if and only if $m_f=m_g=\alpha/a_4$. Note that $m_f<m_g$ implies $E$ is not irreducible.
\end{proof}

\subsection{Fano hypersurfaces}\label{hyper}
In this part, we exclude a singular point on one of the Fano 3-fold hypersurfaces in the complete list of $95$ families of Reid--Iano-Fletcher \cite{fle}.

%\subsubsection{Birational involution on}
We pick  $X_5\subset\PP(1,1,1,1,2)$ as the first in the list. In~\cite[Example~4.5]{CPR} the singular point is excluded by the other method, when $X$ is general. However, the method does not work when the generality is not assumed, as we see below. For the general case we show that the 2-ray game on the blow up of this point produces a birational self-map to $X$, and the 2-ray game on the special case excludes this point immediately using Theorem\,\ref{idea-Fano}. For the exclusion of this point with the other method see \cite[Lemma~5.2.2]{VJ-95}. The behaviour of other quotient singular points on 3-folds in the list of 95 families can be investigated similarly.

The 3-fold $X$ is defined as $(f=0)\subset\PP=\PP(1,1,1,1,2)$, where
\[f=s^2x+g\text{ in which }g=sa_3(x,\dots,t)-b_5(x,\dots,t)\]
for homogeneous polynomials $a_3$ and $b_5$, of degrees indicated by their indices. With respect to the map~(\ref{blow-up-map}) the blow up $\Phi\colon T\rightarrow\PP$ is given by 
\[(u,\s,\t,\z,\y,\x)\stackrel{\Phi}{\mapsto}(u^\frac{\alpha}{2}\x:u^\frac{1}{2}\y:u^\frac{1}{2}\z:u^\frac{1}{2}\t:\s)\]
We have that
\[m_g=\left\{\begin{array}{lcll}
3/2&&\text{if }a_3(0,y,z,t)\not\equiv 0&\qquad\text{General case}\\
5/2&&\text{otherwise}&\qquad\text{Special case}
\end{array}\right.\]
We study these two cases separately, as they actually behave very differently.

\subsubsection{Special case: no link from the blow up}\label{special(1112)}

Using Lemma~\ref{alpha}, $\alpha=5/2$ (therefore $m_g=5/2$), hence the toric space has the Cox ring $R_T=\C[u,\s,\t,\z,\y,\x]$, with irrelevant ideal $I_T=(u,\s)\cap(\x,\dots,\t)$ and the grading

\[\left(\begin{array}{cccccc}
u&\s&\t&\z&\y&\x\\
0&2&1&1&1&1\\
1&1&0&0&0&-2\end{array}\right)\]
The 2-ray game on $T$ is $T\rightarrow\PP$ on one side, and the morphism $\Psi\colon T\rightarrow\PP^\prime=\PP(1,2,2,2,5)$ on the other side, given by the linear system $|\frac{1}{2}\mathcal{O}_T(1,0)|$, that is
\[(u,\s,\t,\z,\y,\x)\stackrel{\Psi}{\mapsto}(\x^\frac{1}{2}u,\y,\z,\t,\x^\frac{5}{2}\s)\]
Note that this map contracts the divisor $\widetilde{E}=(\x=0)$ in $T$ to $\PP^2_{y:z:t}\subset\PP^\prime$. Let us have a closer look at this contraction. The divisor $\widetilde{E}$, as a scheme, is the toric 3-fold defined by 

\[\left(\begin{array}{ccccc}
u&\s&\t&\z&\y\\
0&2&1&1&1\\
1&1&0&0&0\end{array}\right)\]
which is a $\Proj_{\PP^2}(\mathcal{O}_{\PP^2}\oplus\mathcal{O}_{\PP^2}(2)$, hence a $\PP^1$-bundle over $\PP^2$. And $\Psi$ on $\widetilde{E}$ is just the projection to the $\PP^2$, that contracts fibring $\PP^1$. 

On the other hand, $Y\subset T$ is defined by the vanishing of
\[\overline{f}=\s^2\x+u\s\x a^\prime(u\x,\y,\z,\t)-b(u\x,\y,\z,\t)\]
where $a^\prime=\frac{a}{\x}$.
Now we restrict $\Psi$ to $Y$. We have that a divisor $E:(\x=0)\subset Y$, that is $E\colon(b(0,\y,\z,\t)=0)\subset\widetilde{E}$, contracts via this restriction to the quintic curve $(b(0,\y,\z,\t)=0)\subset\PP^2$. Hence $Y$ 2-ray follows $T$. However, by adjunction formula
\[-K_Y=(-K_T-Y)|_Y=\mathcal{O}_Y(1,0)\]
as $Y\in|\mathcal{O}_T(5,0)|$. On the other hand, 
\[\MMob(Y)=\left<(2,1),(1,0)\right>\]
In particular, $-K_Y\in\partial\Mob(Y)$, and not in the interior. By Theorem~\ref{idea-Fano} the singular point cannot be the centre of a maximal singularity.

\subsubsection{General case: birational involution}

Similar to the previous case the toric space has the Cox ring $R_T=\C[u,\s,\t,\z,\y,\x]$, with irrelevant ideal $I_T=(u,\s)\cap(\x,\dots,\t)$ and the grading

\[\left(\begin{array}{cccccc}
u&\s&\t&\z&\y&\x\\
0&2&1&1&1&1\\
1&1&0&0&0&-1\end{array}\right)\]
The 2-ray game on $T$ is $T\rightarrow\PP$ on one side, and the morphism $\Psi\colon T\rightarrow\PP^\prime=\PP(1,1,1,1,3)$ on the other side, given by the linear system $|\mathcal{O}_T(1,0)|$, that is
\[(u,\s,\t,\z,\y,\x)\stackrel{\Psi}{\mapsto}(\y,\z,\t,\x u,\x\s)\]
Note that this map contracts the divisor $\widetilde{E}=(\x=0)$ in $T$, which is exactly as in the previous case.
On the other hand, $Y\subset T$ is defined by the vanishing of
\[\overline{f}=\s^2\x+\s a(u\x,\y,\z,\t)-ub(u\x,\y,\z,\t)\]
Now we restrict $\Psi$ to $Y$. In the first place we see that $E=\widetilde{E}|_Y$ is a hypersurface in $\widetilde{E}$ defined by $(\s a(0,\y,\z,\t)-ub(0,\y,\z,\t)=0)\subset\widetilde{E}$. Looking at the map $\widetilde{E}\rightarrow\PP^2$ restricted to $E$ one can see that $E\rightarrow\PP^2$ is a $1$-to-$1$ map except at the $15$ solutions of $a(0,y,z,t)=b(0,y,z,t)=0)\subset\PP^2$, where $15$ copies of $\PP^1$ are contracted. Hence, on $Y$ we have a small contraction, while on $T$ it was a divisorial contraction. Therefore $Y$ does not 2-ray follow $T$. 

In order to recover the correct toric ambient space, where $Y$ is embedded and it 2-ray follows, we observe that the pre-image of $\PP^2\subset\PP^\prime$ is defined by $u=\s=0$ and also that $f$ vanishes along this. Note that $(u,\s)$ is a component of the irrelevant ideal of $T$, hence we call this copy of $\PP^2$ as ``fake divisor'' on $Y$. To recover the correct toric ambient space we use the method of unpojection, away from the (fake) divisor $(u=\s=0)$, as follows. We introduce a new variable 
\begin{equation}\label{def-equation}r=\frac{\s\x+a(u\x,\y,\z,\t)}{u}=\frac{b(u\x,\y,\z,\t)}{\s}\in\mathcal{O}(3,-1)\end{equation}
Now construct a new toric variety $T^\prime$ with the Cox ring $R_{T^\prime}=\C[u,\s,\t,\z,\y,r,\x]$, irrelevant ideal $I_{T^\prime}=(u,\s)\cap(r,\x,\dots,\t)$ and the grading

\[\left(\begin{array}{ccccccc}
u&\s&\t&\z&\y&r&\x\\
0&2&1&1&1&3&1\\
1&1&0&0&0&-1&-1\end{array}\right)\]
Now define $Y^\prime\subset T^\prime$ as the complete intersection defined by
\[\s\x+a(u\x,\y,\z,\t)-ru=b(u\x,\y,\z,\t)-r\s=0\]
which come from the relations in (\ref{def-equation}). Note that under the projection $\pi_r\colon T^\prime\rightarrow T$ we have that $Y$ and $Y^\prime$ are isomorphic.

The 2-ray game on $T^\prime$ goes as follows. 
\[
\xymatrixcolsep{1.5pc}\xymatrixrowsep{2.3pc}
\xymatrix{
&T^\prime\ar_{\Phi}[ld]\ar^{\Psi}[rd]\ar^{\text{FLIP}}@{-->}[rr]&&T_1\ar_{\Phi_1}[ld]\ar^{\Psi_1}[rd]&\\
\PP&&Z_0&&\PP}\]
where 
\begin{enumerate}[(i)]
\item $\PP=\PP(1,1,1,1,2,3)$,
\item $\Phi\colon T^\prime\rightarrow\PP$ is given by the ray spanned by $\frac{1}{2}D_\s$ and in coordinates is
\[(u,\s,\t,\z,\y,r,\x)\stackrel{\Phi}{\mapsto}(u^\frac{1}{2}\y:u^\frac{1}{2}\z:u^\frac{1}{2}\t:u^\frac{3}{2}\x:\s:u^\frac{5}{2}r)\]
\item $\Psi_1\colon T_1\rightarrow\PP$ is given by the ray spanned by $\frac{1}{2}D_r$ and in coordinates is
\[(u,\s,\t,\z,\y,r,\x)\stackrel{\Psi}{\mapsto}(x^\frac{1}{2}\y:x^\frac{1}{2}\z:x^\frac{1}{2}\t:x^\frac{3}{2}u:r:x^\frac{5}{2}\s)\]\item the map $T^\prime\rightarrow T_1$ is a flop of type $(1,1,-1,-1)$ over $\PP^2$, in the sense that
\begin{enumerate}[(1)]
\item $\Psi$ contracts the locus (of codimension $2$) defined by $(r=\x=0)$, which is the toric variety with Cox ring
\[\left(\begin{array}{ccccc}
u&\s&\t&\z&\y\\
0&2&1&1&1\\
1&1&0&0&0\end{array}\right)\]
Clearly $\Psi$, on this locus, contracts fibring $\PP^1$s to the base $\PP^2$.
\item $\Phi_1$, on the other hand, is very similar and contracts the locus $(u=\s=0)$ to the same $\PP^2$ with $\PP^1$ fibres above each point. \end{enumerate}
In other words the composition map $T^\prime\dashrightarrow T_1$ flops these $\PP^1$s at each fibre above the base $\PP^2$.
\end{enumerate}

Now we restrict all these maps to $Y^\prime$, and its images. We first observe that $X$, the initial variety, can be embedded in $\PP$ via the embedding of $\PP(1^4,2)\subset\PP$. In particular, $X$ is a complete intersection of two hypersurfaces as follows
\[(r-sx-a(u,y,z,t)=b(u,y,z,t)-rs=0)\subset\PP\]
It is now easy to check that for the restriction maps we get the following diagram

\[
\xymatrixcolsep{1.5pc}\xymatrixrowsep{2.3pc}
\xymatrix{
&Y^\prime\ar_{\varphi}[ld]\ar^{\psi}[rd]\ar^{\text{flop}}@{-->}[rr]&&Y_1\ar_{\varphi_1}[ld]\ar^{\psi_1}[rd]&\\
X&&X_0&&X_1}\]
where $\varphi\colon Y^\prime\rightarrow X$ is the Kawamata blow up. The map $Y^\prime\dashrightarrow Y_1$ is the flop of the $15$ copies of $\PP^1$ that correspond to the solutions of $(a(0,y,z,t)=b(0,y,z,t)=0)\subset\PP^2$. 

Now, $Y_1$ is defined as the vanishing of 
\[\s\x+a(u\x,\y,\z,\t)-ru=b(u\x,\y,\z,\t)-r\s=0\]
in the toric variety $T_1$, where the Cox ring of $T_1$ is the same as that of $T$ except the irrelevant ideal has changed to $I_{T_1}=(r,\x)\cap(u,\s,\y,\z,\t)$.

It can also be checked that the last map $\psi_1\colon Y_1\rightarrow X_1$ contracts the divisor $E_1\subset Y_1$, that is given by ($\x=0)$, and is isomorphic to $\PP^2$. The image of the contraction is the $p_r$ point in $X_1\subset\PP$, where $X_1$ is defined by 
\[s+a(u,y,z,t)-ru=b(u,y,z,t)-rs=0\]
Obviously the variable $s$ can be eliminated to give $X_1\subset\PP(1,1,1,1,2)$ as a hypersurface defined by
\[r^2u-ra(u,y,z,t)-b(u,y,z,t)=0\]
Clearly $X$ is isomorphic to $X_1$, not via this construction.

\begin{rmk} This is the case of {\it quadratic involution}, as it is called in \cite[\S4.4]{CPR}. Another type of involution is the {\it elliptic involution} as in \cite[\S4.10]{CPR}. One can see that the elliptic involutions can be constructed very similarly with a few more unprojections.\end{rmk}

%\subsubsection{Exclusion of the $1/2(1,1,1)$ singular point on $X_7\subset\PP(1,1,1,2,3)$}

%\subsubsection{The ``amazing example'' $X_{18}\subset\PP(1,3,4,5,6)$ and Type~\III\, unprojection}

\section{Complete intersection Fano 3-folds}\label{c.i}
The question of birational rigidity for quasi-smooth Fano hypersurfaces of index one was completed recently in \cite{VJ-95}. A similar study was carried on by Okada for Fano varieties in codimension two. In this section, we complete his work by removing the generality assumptions he imposed.

\subsection{Okada's list}
There are 85 families of Fano 3-fold wighted complete intersections in codimension 2 \cite[\S16.7]{fle}. It was proved in \cite{pukh-c.i.} that a general member in one of these families, that is the smooth complete intersection of a quadric and a cubic in $\PP^5$, is birationally rigid. Okada in \cite{okada} systematically studied birational rigidity and birational non-rigidity for (general members in) all other 84 families. In particular, he showed birational rigidity for 18 of these families (in addition to $X_{2,3}\subset\PP^5$), and for general members in others the pliability set includes at least two elements. Within those 18 families he assumes some generality conditions for 8 cases. Here we study the special cases, not considered by Okada, for all cases where the ambient space is singular. We prove that the generality assumptions can be lifted. The families with the generality conditions are presented in Table~\ref{okada-table}.

We denote a Fano variety in codimension two by 
\[X_{d_1,d_2}\subset\PP(a_0,a_1,a_2,a_3,a_4,a_5)\]
where $X$ is defined as the intersection of $\{f=0\}\cap\{g=0\}$, where $f$ and $g$ are homogenous polynomials of degree, respectively, $d_1$ and $d_2$, with respect to the grading on the weighted projective space, and $d_1\leq d_2$ and $a_0\leq a_1\leq\cdots\leq a_5$. We denote the variables on the weighted projective $5$-fold by $x,y,z,t,s,w$, in that order.

\begin{table}[h]\label{okada-table}\centering 
\begin{tabular}{|c|c|c|c|}
\hline
No.&$X_{d_1,d_2}\subset$WPS&Singular point&Generality assumption\\
\hline
$20$&$X_{6,8}\subset\PP(1,2,2,3,3,4)$&$6\times\frac{1}{2}(1,1,1)$&$\Bs|I_{p,X}(-2K_X)|$ is irreducible $\forall p\!:\!\frac{1}{2}(1,1,1)$\\
\hline
31&$X_{8,10}\subset\PP(1,2,3,4,4,5)$&$\frac{1}{3}(1,1,2)$&$wz\in f$ and $z^2s, z^2t\in g$\\
\hline
37&$X_{8,12}\subset\PP(1,2,3,4,5,6)$&$4\times\frac{1}{2}(1,1,1)$&$s^2\in f$\\
\hline
&$X_{8,12}\subset\PP(1,2,3,4,5,6)$&$2\times\frac{1}{3}(1,1,2)$&$s^2\in f$ and $s^3\in g$\\
\hline
47&$X_{10,12}\subset\PP(1,3,4,4,5,6)$&$2\times\frac{1}{3}(1,1,2)$&the curve $C_{1,4,5,6}\not\subset X$\\
\hline
51&$X_{10,14}\subset\PP(1,2,4,5,6,7)$&$\frac{1}{4}(1,1,3)$&the curve $(x=y=0)\cap X$ is irreducible\\
\hline
59&$X_{12,14}\subset\PP(1,4,4,5,6,7)$&$3\times\frac{1}{4}(1,1,3)$&$C_{1,4,6,7}\not\subset X$\\
\hline
64&$X_{12,16}\subset\PP(1,2,5,6,7,8)$&$\frac{1}{5}(1,2,3)$&$z^2s\in g$\\
\hline
71&$X_{14,16}\subset\PP(1,4,5,6,7,8)$&$2\times\frac{1}{4}(1,1,3)$&$C_{1,7,8,10}\not\subset X$\\
\hline
&$X_{14,16}\subset\PP(1,4,5,6,7,8)$&$\frac{1}{5}(1,2,3)$&$z^2s\in g$\\
\hline
\end{tabular}
\vspace{0.15cm}
\caption{\small rigid families with generality assumptions in Okada's list. See \cite[Theorem~1.2]{okada} for details.} 
\end{table} 

\begin{rmk} All varieties considered are complete intersections and only admit cyclic quotient singularities, inherited from their ambient weighted projective space. Then $\Q$-factoriality follows from a theorem of Grothendieck, see \cite[Lemma 3.5]{CPR}, and the fact that the Picard group is isomorphic to $\Z$ follows from Lefschetz' hyperplane section theorem.
\end{rmk}

\subsection{Global Kawamata blow up of Fano complete intersections}
In each case, we start by a triple $(\PP,X,p)$, coming from the Table~\ref{okada-table}, so that $X$ is the complete intersection in the weighted projective space $\PP$ and $p\in X$ is a quotient terminal singularity. Similar to Subsection~\ref{toric-method}, we construct a toric variety $T$ together with a morphism $\Phi:T\rightarrow\PP$ such that the restriction map $\varphi=\Phi|_Y:Y=\Phi^{-1}_*(X)\rightarrow X$ is the Kawamata blow up of $X$ at the point $p$. The main difference to that case is that instead of one equation we have two! But the multiplicities can be computed exactly in the same way and it can be checked that, just as in the hypersurface case,  the Kawamata blow up is obtained with the correct blow up {\it weights for the tangent variables}.

\subsection{Proof of Theorem\,\ref{rigidity-Fano}}

\subsubsection{Exclusion via the full Cox ring}\label{full-Cox}

 \paragraph*{\bf Family No.\,$64$} $\quad\frac{1}{5}(1,2,3)\in X_{12,16}\subset\PP(1,2,5,6,7,8)$\\
 The generality assumption imposed by Okada is that the monomial $z^2s$ appears in $g$ with nonzero coefficient.
 
 The quasi-smoothness of $X$ implies that $s^2, tz\in f$ and also $w^2,t^2y,z^3x\in g$, in other words
 \[f=s^2+ tz+\cdots\quad\text{and}\quad g=w^2+t^2y+z^3x+\cdots\]
The tangent monomials to the non-vanishing variable $z$, at the singular point, are $t$ and $x$. This implies that the blow up must locally be of type $u^\frac{1}{5}s,u^\frac{2}{5}y,u^\frac{3}{5}w$. It is now easy to check that $m_f=2/5$ and $m_g=6/5$. Therefore the Cox ring of $T$ is $\C[u,\z,\s,\t,\w,\y,\x]$ with weights
\[\left(\begin{array}{ccccccc}
u&\z&\s&\t&\w&\y&\x\\
0&5&6&7&8&2&1\\
-5&0&1&2&3&2&1\end{array}\right)\]
which normalises (by subtracting the first row from the second row and then dividing by $-5$) to
\[\left(\begin{array}{ccccccc}
u&\z&\s&\t&\w&\y&\x\\
0&5&6&7&8&2&1\\
1&1&1&1&1&0&-1\end{array}\right)\]
One can check that as in the examples in the previous section, $Y$ 2-ray follows $T$. In particular the maps, at the level of 3-folds,  are
\[
\xymatrixcolsep{1.5pc}\xymatrixrowsep{2.3pc}
\xymatrix{
&Y\ar_{\varphi}[ld]\ar^{\text{isomorphism}}_{\s^2\in\overline{f}}@{->}[rr]&&Y_1\ar^{\text{antiflip}}_{(-7,-1,1,8)}@{-->}[rr]&&Y_2\ar^{\text{isomorphism}}_{\w^2\in\overline{g}}@{->}[rr]&&Y_3\ar^{\psi}[rd]&\\
X&&&&&&&&X_1}\]
In particular, the mobile cone of $Y$ is generated by $(5,1)$ and $(2,0)$. On the other hand, using adjunction formula $-K_Y\in|\mathcal{O}_Y(1,0)|$, hence lies in the boundary of the mobile cone and therefore this blow up does not initiate a Sakisov link. In fact it gives a bad link.\\

 \paragraph*{\bf Family No.\,$47$} $\quad\frac{1}{3}(1,1,2)\in X_{10,12}\subset\PP(1,3,4,4,5,6)$\\
The generality assumption of Okada in this case is equivalent to $f$ not having terms $y^2z$ and $y^2s$, and also $g$ not having the terms $s^3$ and $z^3$. Combining the quasi-smoothness conditions at the vertex points, together with these assumptions we have that
\[f=t^2+y^3x+\cdots\quad\text{and}\quad g=w^2+y^2w+zs\ell(z,s)+\cdots\]
where $\ell$ is a linear form in $z$ and $s$. Without loss of generality, we assume the singular point is the $p_z$. The arguments for the two other points are completely symmetric. One can check that $m_f=4/3$ and $m_g=1$. Hence the weights in the Cox ring of the blow up toric variety are
\[\left(\begin{array}{ccccccc}
u&\y&\z&\s&\t&\w&\x\\
0&3&4&4&5&6&1\\
1&1&1&1&1&1&-1\end{array}\right)\]
The 2-ray game (following the ambient space) is
\[
\xymatrixcolsep{1.5pc}\xymatrixrowsep{2.3pc}
\xymatrix{
&Y\ar_{\varphi}[ld]\ar^{\text{antiflip}}_{3\times(-3,-1,1,5)}@{-->}[rr]&&Y_1\ar^{\text{isomorphism}}_{\t^2\in\overline{f}}@{->}[rr]&&Y_2\ar^{\psi}[rd]&\\
X&&&&&&X_1}\]
On the other hand $-K_Y\in|\mathcal{O}_Y(1,0)|$, which lies outside of the mobile cone. Hence the exclusion follows by mobility obstruction.\\

 \paragraph*{\bf Family No.\,$71$} $\quad\frac{1}{5}(1,2,3)\in X_{14,16}\subset\PP(1,4,5,6,7,8)$\\
Assume that the generality condition does not hold, i.e., $z^2s$ does not appear in $g$. Using the quasi-smoothness conditions we have
\[f=t^2+sw+z^2y+\cdots\quad\text{and}\quad g=w^2+s^2y+z^3x+\cdots\]
Using the assumptions, one can check that $m_f=4/5$ and $m_g=6/5$. The matrix of weights on the blow up variety is

\[\left(\begin{array}{ccccccc}
u&\z&\s&\t&\w&\y&\x\\
0&5&6&7&8&4&1\\
1&1&1&1&1&0&-1\end{array}\right)\]
The 2-ray game (following the ambient space) is
\[
\xymatrixcolsep{1.5pc}\xymatrixrowsep{2.3pc}
\xymatrix{
&Y\ar_{\varphi}[ld]\ar^{\text{antiflip}}_{(-6,-1,1,7)}@{-->}[rr]&&Y_1\ar^{\text{isomorphism}}_{\t^2\in\overline{f}}@{->}[rr]&&Y_2\ar^{\text{isomorphism}}_{\w^2\in\overline{g}}@{->}[rr]&&Y_3\ar^{\psi}[rd]&\\
X&&&&&&&&X_1}\]

Also $-K_Y\in|\mathcal{O}_Y(1,0)|$ which lies on the boundary of the mobile cone. Hence it is excluded by a bad link.\\

\paragraph*{\bf Family No.\,$31$} $\quad\frac{1}{3}(1,1,2)\in X_{8,10}\subset\PP(1,2,3,4,4,5)$\\
This case is more delicate, as it splits into several sub-cases. First, recall that Okada's generality conditions imply $zw\in f$ and $z^2s,z^2t\in g$. Suppose, without loss of generality, that $z^2t\notin g$ and $zw\notin f$. The quasi-smoothness at the point $p_z$ implies that $z^2y\in f$ and one of $z^2s$ or $z^3x$ appear in $g$ with non-zero coefficient.

Suppose $z^2s\notin g$ and $z^3x\in g$. It is then easy to see, similar to the cases above, that $m_f=2/3$ and $m_g=4/3$. The Cox ring is
\[\left(\begin{array}{ccccccc}
u&\z&\s&\t&\w&\y&\x\\
0&3&4&4&5&2&1\\
1&1&1&1&1&0&-1\end{array}\right)\]
Then $Y$ 2-ray follows $T$ and the game is a $(-4,-1,1,5)$ anti-flip followed by a $K$-trivial contraction. This is because $-K_Y$ is on the boundary of the mobile cone, providing a bad link.

Now suppose $z^2s\in g$. With a change of coordinates we can kill the terms $x^6y$, $z^2x^2$ and $x^4t$ in $f$ as well as $z^3x$ in $g$. Moreover, we can assume that the $1/4(1,1,3)$ singularities are at the vertex points $p_s$ and $p_t$, and also $p_y\in X$. In particular we have
\[f= z^2y+ts+\cdots\quad\text{and}\quad g=w^2+z^2s+s^2y+t^2y+\cdots\]

It is now easy to check that $m_g=4/3$, and $m_f=2/3$ or $m_f=5/3$. In fact $m_f=2/3$ if and only if the monomial $zxt$ appears in $f$. We call it  Family No.\,$31^*$ and will treat it in Subsection\,\ref{elliptic}. Suppose $m_f=5/3$, which indicates that the Cox ring is
\[\left(\begin{array}{ccccccc}
u&\z&\t&\w&\x&\s&\y\\
0&3&4&5&1&4&2\\
1&1&1&1&0&0&-1\end{array}\right)\]
Again $Y$ 2-ray follows $T$, and $\MMob(Y)=\left<(3,1),(1,0)\right>$. The fact that $-K_Y\in\mathcal{O}_Y(1,0)$, using adjunction formula, implies that it lies in the boundary of the mobile cone.\\

\paragraph*{\bf Family No.\,$20$} $\quad\frac{1}{2}(1,1,1)\in X_{6,8}\subset\PP(1,2,2,3,3,4)$\\
Let use assume first, without loss of generality, that the two $1/3(1,1,2)$ points correspond to $p_s$ and $p_t$. This is to say that $f(0,0,0,s,t,0)=st$. Quasi-smoothness condition on $X$ requires polynomials of the form $s^2\ell_1(y,z)$ and $t^2\ell_2(y,z)$ to appear in $g$ with non-zero coefficients, where $\ell_i$ are linear forms in $y$ and $z$.

Without loss of generality we can assume the point to be excluded is the vertex point $p_z$. For this point $\Bs|I_{p,X}(-2K_X)|$ corresponds to the scheme \[\{\widetilde{f}=\widetilde{g}=0\}\subset\PP(2,3,3,4)\]
where $\widetilde{f}=f(0,0,z,s,t,w)$ and $\widetilde{g}=g(0,0,z,s,t,w)$. The generality assumption in this case requires this scheme to be irreducible. However,  note that $\widetilde{f}$ consists of monomials $st$ and $zw$, while $\widetilde{g}$ contains (possibly) the monomials $w^2, s^2z,t^2z,tsz, z^2w$. On the other hand $\widetilde{f}$ must contain $st$, for $X$ to be quasi-smooth. This base locus fails to be irreducible if either  $zw\in\widetilde{f}$ and none of the monomials $s^2z$, $t^2z$, $tsz$ appear in $\widetilde{g}$ (Case\,I), or $zw\notin\widetilde{f}$ (Case\,II).

Case\,I. Clearly $\ell_1$ and $\ell_2$ are non-trivial linear forms in $y$ and we have
\[f=st+zw+\cdots\quad\text{and}\quad g=w^2+s^2y+t^2y+z^3y+\cdots\]

The blow up locally is given by $u^\frac{1}{2}x,u^\frac{1}{2}s,u^\frac{1}{2}t$. Now $st\in f$ implies that $m_f=1$. On the other hand $m_g=1$ if and only if one of the monomials $z^3x^2$, $z^2xs$, $z^2xt$ or $z^2w$ appear in $g$ with non-zero coefficient. Note that we do not need to consider the monomial $z^3x^2$, as this can be killed using the term $z^3y$.

If $m_g=1$, then the Cox ring of the toric variety $T$ is 
\[\left(\begin{array}{ccccccc}
u&\z&\s&\t&\w&\x&\y\\
0&2&3&3&4&1&2\\
1&1&1&1&1&0&0\end{array}\right)\]
and $Y$ is defined by the vanishing of
\[\overline{f}=\s\t+\z\w+\cdots\]
\[\overline{g}=\z^2\left(\alpha \w+\beta \x\s+\gamma \x\t\right)+u\left(\w^2+\s^2\y+\t^2\y+\z^3\y+\cdots\right)\]
where at least one of $\alpha,\beta,\gamma$ is non-zero. The need for unprojection is clear now! We have a new variable 
\[r=\frac{\alpha \w+\beta \x\s+\gamma \x\t}{-u}=\frac{\w^2+\s^2\y+\t^2\y+\z^3\y+\cdots}{\z^2}\]
The new toric variety $T'$ has the Cox ring
\[\left(\begin{array}{cccccccc}
u&\z&\s&\t&\w&\x&\y&r\\
0&2&3&3&4&1&2&4\\
1&1&1&1&1&0&0&0\end{array}\right)\]
and $Y$ is defined by the equations
\[\left\{\begin{array}{l}
ru+\alpha \w+\beta \x\s+\gamma \x\t=0\\
r\z^2=\w^2+\s^2\y+\t^2\y+\z^3\y+\cdots\\
\s\t+\z\w+\cdots=0
\end{array}
\right.\]
It is now easy to check that $Y$ 2-ray follows $T'$ and the game ends with an elliptic fibration over $\PP(1,2,4)$, a bad link.

Now suppose $m_g=2$. Then the Cox ring of $T$ is given by
\[\left(\begin{array}{ccccccc}
u&\z&\s&\t&\w&\x&\y\\
0&2&3&3&4&1&2\\
1&1&1&1&1&0&-1\end{array}\right)\]
and $Y$ is defined by
\[\overline{f}=\s\t+\z\w+\cdots\]
\[\overline{g}=\w^2+u\s^2\y+u\t^2\y+\z^3\y+\cdots\]
It is now easy to check that $Y$ 2-ray follows $T$ and the game ends in a bad link. This is because $(1,0)$ spans a boundary of the mobile cone and $-K_Y$ lies exactly on this boundary.

Case\,II. Suppose $zw\notin f$. The quasi-smoothness condition implies that $z^2y\in f$, and hence $z^2w\in g$, and we have
\[f=st+z^2y+\cdots\quad\text{and}\quad g=w^2+z^2w+s^2\ell_1(y,z)+t^2\ell_2(y,z)+\cdots\]
As the blow up is locally given by $u^\frac{1}{2}x,u^\frac{1}{2}s,u^\frac{1}{2}t$, we have that $m_f=1$ due to appearance of $st$ in $f$. On the other hand $m_g=1$ if and only if either $z\in\ell_i$, for some $i$, or (at least) one of the monomials
\[z^3x^2,z^2xs,z^2xt,z^3y, zst\]
appears in $g$ with non-zero coefficient. However, all these monomials can be killed using the term $z^2w$, and a change of coordinates. Hence we only need to consider the case where
\[\alpha s^2z+\beta t^2z+\gamma stz\in g\]
and at least one of the coefficients $\alpha$, $\beta$ or $\gamma$ is non-zero. Then the Cox ring of the toric ambient space $T$ is
\[\left(\begin{array}{ccccccc}
u&\z&\s&\t&\w&\x&\y\\
0&2&3&3&4&1&2\\
1&1&1&1&1&0&0\end{array}\right)\]
and $Y$ is defined by
\[\overline{f}=\s\t+\z^2\y+\cdots\quad\text{and}\quad \overline{g}=\z( \z\w+\alpha\s^2+\beta\t^2+\gamma\s\t)+u(\w^2+\cdots)\]
After unprojection (using $\overline{g}$) we get a toric space $T'$ with cox ring
\[\left(\begin{array}{cccccccc}
u&\z&\s&\t&\w&r&\x&\y\\
0&2&3&3&4&6&1&2\\
1&1&1&1&1&1&0&0\end{array}\right)\]
and $Y$ is defined by
\[\left\{\begin{array}{l}
r\z+\w^2+\cdots=0\\
ru+\z\w+\alpha\s^2+\beta\t^2+\gamma\s\t=0\\
\s\t+\z^2\y+\cdots=0
\end{array}
\right.\]
It can be checked that if $\alpha\neq 0$ or $\beta\neq 0$, then $Y$ 2-ray follows $T'$ and we obtain a $K3$-fibration over $\PP(1,2)$, hence a bad link.

On the other hand, if $\alpha=\beta=0$, then the equations defining $Y$ are
\[\left\{\begin{array}{l}
r\z+\w^2+\s^2\y+\t^2\y+\cdots=0\\
ru+\z\w+\s\t=0\\
\s\t+\z^2\y+\cdots=0
\end{array}
\right.\]
However, substituting $\s\t$ from the second equation to the third shows that (the new) third equation is in the ideal $(u,\z$, hence we must unproject this divisor. Once this is done we get a new variable
\[\eta=\frac{r+\cdots}{\z}=\frac{\w+\z\y+\cdots}{u}\]
which has weight $(4,0)$. After rewriting the equations from these ratios, we can eliminate variables $r$ and $\w$ to get a new toric variety $T''$ with Cox ring
\[\left(\begin{array}{ccccccc}
u&\z&\s&\t&\x&\y&\eta\\
0&2&3&3&1&2&4\\
1&1&1&1&0&0&0\end{array}\right)\]
and the equations defining $Y$ become
\[\left\{\begin{array}{l}
\s\t+\z(u\eta+\z\y+\cdots)+u(\eta\z+\cdots)=0\\
\z^2\eta+\z(\cdots)+(u\eta+\z\y+\cdots)^2+\s^2\y+\t^2\y+\cdots=0
\end{array}
\right.\]
Now, $Y$ 2-ray follows $T''$ and the game end with an elliptic fibration over $\PP(1,2,4)$.

Now suppose that $m_g=2$. In this case the Cox ring of $T$ is
\[\left(\begin{array}{ccccccc}
u&\z&\s&\t&\w&\x&\y\\
0&2&3&3&4&1&2\\
1&1&1&1&1&0&-1\end{array}\right)\]
and $Y$ is defined by 
\[\overline{f}=\s\t+\z^2\y+\cdots\quad\text{and}\quad \overline{g}=u\w^2+z\z^2\w+\s^2\ell_1+\t^2\ell_2+\cdots\]
We have one unprojection to introduce the new variable $r$ with weight $(6,1)$. Then the new toric space is $T'$ with Cox ring
\[\left(\begin{array}{cccccccc}
u&\z&\s&\t&\w&r&\x&\y\\
0&2&3&3&4&6&1&2\\
1&1&1&1&1&1&0&-1\end{array}\right)\]
and the equations (for $Y$) are
\[\left\{\begin{array}{l}
r\z+\w^2+\cdots=0\\
ru+\s^2+\t^2+\z\w=0\\
\s\t+\z^2\y+\cdots=0\end{array}
\right.\]
Now $Y$ 2-ray follows $T'$ and the game ends with a $K3$-fibration (a bad link).\\

\paragraph*{\bf Family No.\,$37$}$\quad\frac{1}{3}(1,1,2)\in X_{8,12}\subset\PP(1,2,3,4,5,6)$\\
Okada's generality assumption here are $s^2\in f$ and $s^3\in g$. Note that it is required, for $X$ to have terminal singularities that one of these two has to happen. Hence we divide the special case into two sub-cases, i.e.\,when $s^2\in f$ and $s^3\notin g$ and vice versa.

Suppose $s^2\notin f$ and $s^3\in g$. Note that quasi-smoothness of $X$ imply that $tz\in f$ and $w^2,t^2y,z^2w\in g$. Without loss of generality, we can assume that one of the $1/2(1,1,1)$ points is $p_y$, which requires that $y^2s\in f$ or $yw\in f$. But if $wy\notin f$, an easy analysis shows that the other $1/2(1,1,1)$ point is not a quotient singularity, hence $X$ is not quasi-smooth. Therefore we assume that 
\[f=tz+yw+\cdots\quad\text{and}\quad g=s^3+w^2+t^2y+z^2w+y^4s+\cdots\]
On the other hand, note that any of the monomials $z^2x^2,zxs,z^2y$ can be killed, in $f$, using the term $tz$.

The blow up is locally given by $u^\frac{1}{3}x,u^\frac{1}{3}s,u^\frac{2}{3}y$. Hence, $m_g=1$. This together with the term analysis above show that $m_f=5/3$. Now the Cox ring of the toric space is given by
\[\left(\begin{array}{ccccccc}
u&\z&\s&\w&\x&\y&\t\\
0&3&4&6&1&2&5\\
1&1&1&1&0&0&0\end{array}\right)\]
and $Y$ is given by the vanishing of
\[\overline{f}=\t\z+\y\w+\cdots\quad\text{and}\quad \overline{g}=\s^3+u\w^2+u\t^2\y+u^3\z^2\w+u^2\y^4\s+\cdots\]
Now, $Y$ 2-ray follows $T$, and the game ends with an elliptic fibration (a bad link).

Now suppose that $s^2\in f$ and $s^3\notin g$. We have that
\[f=tz+s^2+\cdots\quad\text{and}\quad g=w^2+t^2y+z^2w+\cdots\]
which immediately gives $m_f=2/3$. On the other hand $m_g=1$ if and only if at least one of the monomials below appears in $g$ with non-zero coefficient.
\[zxs^2,zst,z^3x^3,z^2x^2s,z^3xy,z^2sy,z^2xt\]
Note that any of these monomials that is divisible by $z^2$ can be killed using the term $z^2w$. On the other hand, if $zst\in g$, then we can substitute $zt$ from $f$ into $g$, so that $s^3$ appears in $g$. But this is treated by Okada. Hence, we assume that the only term in $g$ that makes $m_g=1$ is $zxs^2$. In this case the Cox ring of the toric space $T$ is
\[\left(\begin{array}{ccccccc}
u&\z&\s&\t&\w&\x&\y\\
0&3&4&5&6&1&2\\
1&1&1&1&1&0&0\end{array}\right)\]
and $Y$ is given by
\[\overline{f}=\t\z+\s^2+\cdots\quad\text{and}\quad \overline{g}=\z^2\w+\z\x\s^2+u\w^2+u\t^2\y+\cdots\]
Note that we can substitute $\s^2$ from the first equation into the second to observe that 
\[\overline{g}=\z^2(\w+\x\t+\cdots)+u(\w^2+\t^2\y+\cdots)\]
Now we can unproject by introducing the new variable $r$ as follows:
\[r=\frac{w+\x\t+\cdots}{u}=\frac{\w^2+\t^2\y+\cdots}{-z^2}\]
that has weight $(6,0)$, and writing down the equation from the first ratio eliminates $\w$. So $Y$ is now embedded in a toric variety $T'$ with Cox ring
\[\left(\begin{array}{ccccccc}
u&\z&\s&\t&\x&\y&r\\
0&3&4&5&1&2&6\\
1&1&1&1&0&0&0\end{array}\right)\]
and the equations are
\[\overline{f}=\t\z+\s^2+\cdots\quad\text{and}\quad \overline{g'}=r\z^2+\t^2\y+(ru-\x\t+\cdots)^2+\cdots\]
In this configuration, $Y$ 2-ray follows $T'$, and the game ends with an elliptic fibration (a bad link).\\

 \paragraph*{\bf Family No.\,$71$}$\quad\frac{1}{4}(1,1,3)\in X_{14,16}\subset\PP(1,4,5,6,7,8)$\\
 We do not use Okada's condition for this case. Without loss of generality, assume that the singular point, $1/4(1,1,3)$, to be excluded, is $p_y$. Quasi-smoothness of $X$ forces $f$ and $g$ to be of the form
 \[f=t^2+sw+z^2y+y^2s+\cdots\quad\text{and}\quad g=w^2+s^2y+y^2w+\cdots\]
 The blw up is locally given by $u^\frac{1}{4}x,u^\frac{1}{4}z,u^\frac{3}{4}t$, which immediately reads $m_f=1/2$ and $m_g=1$. In particular, the Cox ring of the toric ambient space $T$ is
  \[\left(\begin{array}{ccccccc}
u&\y&\z&\s&\t&\w&\x\\
0&4&5&6&7&8&1\\
1&1&1&1&1&1&0\end{array}\right)\]
The 3-fold $Y$ is the vanishing of
 \[\overline{f}=\y(\z^2+\y\s)+u(\t^2+\s\w+\cdots)\quad\text{and}\quad \overline{g}=u\w^2+\s^2\y+\y^2\w+\cdots\]
Unprojecting the divisor $\{u=\y=0\}$ using $\overline{f}$, we introduce a new variable $r$ with weight $(10,1)$, and we get 3 equations
\[\left\{\begin{array}{l}
\z^2+ru+\y\s=0\\
u(\w^2+\cdots)+\y(\s^2+\y\w+\cdots)+\z^2(\s+\z\x)=0\\
r\y=\t^2+\s\w+\cdots
\end{array}
\right.\]
We can substitute $\z^2$ from the first equation into the second (i.e.\,changing the generators of the ideal of $Y$) to see that one equation is in the ideal $(u,\y)$, hence we must unproject again! This gives a new variable $\eta$ with weight $(12,1)$ so that the new toric ambient space $T''$ has the Cox ring
 \[\left(\begin{array}{ccccccccc}
u&\y&\z&\s&\t&\w&r&\eta&\x\\
0&4&5&6&7&8&10&12&1\\
1&1&1&1&1&1&1&1&0\end{array}\right)\]
 and $Y$ is defined by
 \[\left\{\begin{array}{l}
\z^2+ru+\y\s=0\\
r\y=\t^2+\s\w+\cdots\\
\y\eta=\w^2+r\s+r\z\x\\
u\eta=\s^2+\y\w+\z\s\x+\cdots
\end{array}
\right.\]
Note that because of the presence of the monomials $\z^2,\s^2,\t^2,\w^2$ in these polynomials, the first 4 steps of the 2-ray game of $T''$ restricted to $Y$ are isomorphism. For the last step, setting $r=1$ shows that the FLIP on the toric space restricts to the inverse of the flip $(2,1,-5,-3,-2;-8)$. This is not a terminal flip (see\,\cite[Theorem\,8]{gavin}), hence the game fails by Theorem\,\ref{idea-Fano}. However, it is still true that $Y$ 2-ray follows $T''$, and we know that $-K_Y\in|\mathcal{O}_Y(1,0)|$, which lies outside of the mobile cone of $Y$.\\
 
 \paragraph*{\bf Family No.\,$59$}$\quad\frac{1}{4}(1,1,3))\in X_{12,14}\subset\PP(1,4,4,5,6,7)$\\
 Again, we do not use Okada's condition. After a change of coordinates, we can assume that $f(0,y,z,0,0,0)=yz(y+z)$. Suppose the $1/4(1,1,3)$ point to be excluded is $p_z$. Quasi-smoothness of $X$ implies
 \[f=t^2+yz(y+z)+sw+\cdots\quad\text{and}\quad g=w^2+y^2t+z^2t+s^2\ell(y,z)+\cdots\]
 where $\ell$ is a linear form in $y$ and $z$. The blow up is locally given by $u^\frac{1}{4}x,u^\frac{1}{4}s,u^\frac{3}{4}w$, and immediately we get $m_f=1$ as $sw\in f$. On the other hand, $m_g=1/2$ if and only if at least one of the monomials $z^3x^2$, $z^2xs$ or $zs^2$ appears in $g$ with non-zero coefficient. Note that the first two monomials can be killed using the term $z^2t$. Hence we have two subcases, either $m_g=1/2$ or $m_g=3/2$ (when $zs^2\notin g$).
 
 Suppose that $m_g=3/2$. The Cox ring of the toric space $T$ is given by 
 \[\left(\begin{array}{ccccccc}
u&\z&\s&\w&\x&\y&\t\\
0&4&5&7&1&4&6\\
1&1&1&1&0&0&0\end{array}\right)\]
 and $Y$ is defined by the vanishing of
  \[\overline{f}=u\t^2+\y\z(u\y+\z)+\s\w+\cdots\quad\text{and}\quad \overline{g}=\w^2+u^2\y^2\t+\z^2\t+\cdots\]
It is easy to check that $Y$ 2-ray follows $T$ and the game ends with an elliptic fibration (a bad link).

Now, suppose $m_g=1/2$. The Cox ring of $T$ is the same as the previous case except that the variable $\t$ has weight $(6,1)$. The 3-fold $Y$ is defined by
  \[\overline{f}=\t^2+\y\z(u\y+\z)+\s\w+\cdots\quad\text{and}\quad \overline{g}=\z(\s^2+\z\t)+u(\w^2+u\y^2+\cdots)\]
  Clearly, we need to unproject using $\overline{g}$, which gives a new toric variety $T'$ with Cox ring
  \[\left(\begin{array}{cccccccc}
u&\z&\s&\t&\w&r&\x&\y\\
0&4&5&6&7&10&1&4\\
1&1&1&1&1&1&0&0\end{array}\right)\]
and $Y$ is defined by
\[\left\{\begin{array}{l}
ru+\s^2+\z\t=0\\
r\z=\w^2+u\y^2\t+\cdots\\
\z^2\y+\s\w+\t^2+\cdots=0
\end{array}
\right.\]
As before, it can be checked that $Y$ 2-ray follows $T'$ and the game ends with a $K3$-fibration.\\ 

 \paragraph*{\bf Family No.\,$51$}$\quad\frac{1}{4}(1,1,3)\in X_{10,14}\subset\PP(1,2,4,5,6,7)$\\
 We assume that the point to be excluded is $p_z$. It is easy to verify that Okada's condition in this case is to say at least one of the monomials $z^2t$ or $s^2z$ appears in $g$ with non-zero coefficient. Hence, we assume that none of them is in $g$. Quasi-smoothness of $X$ implies that
 \[f=s^2+tz+\cdots\quad\text{and}\quad g=w^2+t^2y+z^3y+\cdots\]
 The blow up is locally given by $u^\frac{1}{4}x,u^\frac{1}{4}s,u^\frac{3}{4}w$, which immediately reads $m_f=1/2$. On the other hand $m_g=1/2$ if and only if we have (at least) one of the monomials $z^3x^2$ or $z^2xs$ is in $g$. Note that the first one can be killed using the term $z^3y$. 
 
 Suppose, first, that $z^2xs\notin g$, which implies $m_g=3/2$ as $w^2\in g$. In this case, the Cox ring of the toric ambient space $T$ is 
 \[\left(\begin{array}{ccccccc}
u&\z&\s&\t&\w&\x&\y\\
0&4&5&6&7&1&2\\
1&1&1&1&1&0&-1\end{array}\right)\]
and $Y$ is given by
 \[\overline{f}=\s^2+\t\z+\cdots\quad\text{and}\quad \overline{g}=\w^2+u\t^2\y+\z^3\y+\cdots\]
 It can be checked that $Y$ 2-ray follows $T$ and, in particular, $\MMob(Y)=\left<(4,1),(1,0)\right>$. By adjunction formula $-K_Y\in|\mathcal{O}_Y(1,0)|$, which shows that $-K_Y$ is in the boundary of the mobile cone.
 
 The case $z^2xs\in g$ will be treated in Subsection\,\ref{elliptic} as Family No\,$51^*$.

 \subsubsection{Elliptic involutions}\label{elliptic}
 
 \paragraph*{\bf Family No.\,$31^*$}$\quad\frac{1}{3}(1,1,2)\in X_{8,10}\subset\PP(1,2,3,4,4,5)$\\
 
 Recall that $m_f=2/3$ and $m_g=4/3$.
 \[f=z^2y+ts+tzx+\cdots\quad\text{and}\quad g=w^2+z^2s+t^2y+s^2y+\cdots\]
 Moreover, at least one of the monomials $x^8$, $x^5z$ or $x^3w$ appear in $f$ with non-zero coefficient  (because of the quasi-smoothness condition) and also $f$ does not contain the following monomials: $zw$ (Okada's speciality condition), $s^2$, $t^2$ (without loss of generality assumption on the position of $1/4(1,1,3)$ points), $y^4$ (assumption that $p_y\in X$), $z^2x^2$, $x^6y$ (these two can be killed using the term $z^2y$) and $x^4t$ (which can be killed using the term $ts$). And similarly, $g$ does not contain $z^2t$ (Okada's speciality condition), $z^3x$ (this can be killed using the term $z^2s$) and $y^5$ (following the assumption that $p_y\in X$).
 
 The Cox ring of $T$ is
 \[\left(\begin{array}{ccccccc}
u&\z&\t&\w&\x&\y&\s\\
0&3&4&5&1&2&4\\
1&1&1&1&0&0&0\end{array}\right)\]

Note that $\overline{f}$, the proper transform of $f$ under the blow up, is contained in the ideal $(u,z)$. As in \ref{special(1112)}, we need to unproject $Y$ from the (fake) divisor $D=\{u=z=0\}$. This is done by introducing the new variable
\[r=\frac{zy+xt}{u}=\frac{ts+\cdots}{z}\]
The Cox ring of the new toric variety $T'$ is now
 \[\left(\begin{array}{cccccccc}
u&\z&\t&\w&\x&\y&\s&r\\
0&3&4&5&1&2&4&5\\
1&1&1&1&0&0&0&0\end{array}\right)\]
and $Y$ is defined as a complete intersection in $T'$ by three equations

\[\begin{array}{cl}
\circled{1}& ur=zy+xt\\
\circled{2}&rz=ts+\cdots\\
\circled{3}&w^2+z^2s+t^2y+u^2s^2y+\cdots\end{array}\]

The 2-ray game for $T'$ goes as
\[
\xymatrixcolsep{1.5pc}\xymatrixrowsep{2.3pc}
\xymatrix{
&T^\prime\ar^{\text{FLIP}}@{-->}[rr]\ar_\Phi[ld]&&T_1\ar^{\text{FLIP}}@{-->}[rr]&&T_2\ar^\Psi[rd]&\\
\PP&&&&&&\PP(1,2,4,5)}\]

The first FLIP restricts to a $(-1,-4,1,5)$ anti-flip on $Y$ to $Y_1$, and the second one reads an isomorphism on the 3-fold (it happens away from the 3-folds as $w^2\in\overline{g}$). The map $\Psi$ is a fibration over $\PP(1,2,4,5)$ with $\PP^3$ fibres. We show that this restricts to a generically 2-to-1 map on $Y_1$, and the contracted locus in $Y_1$ is 1-dimensional.

Note that if $r\neq 0$, on the base of the fibration, then it follows from $\circled{1}$ and $\circled{2}$ that on the fibre of each point the variables $u$ and $z$ can be eliminated, so that the fibre is the solution of a quadratic equation (what remains from $\circled{3}$) in $\PP^1_{t:w}$.

Note also that quasi-smoothness also implies that at least one of the monomials $x^8$, $x^5z$ or $x^3w$ must appear in $f$, which provides at least one of the monomials $ux^8$, $x^5z$ or $x^3w$ must appear in $\circled{2}$.  Using this, and the equations $\circled{1}$ and $\circled{2}$, one can check that in the locus $\{r=0\}$, if $x\neq 0$, the fibre of each point contains exactly two points (with multiplicity of course). So it is reduced to study the fibres above the line $\PP(2,4)$, that is $\{x=r=0\}$. Similarly, the fibre of any point except $p_y$ and $p_s$ contains only two points, while over each of these two points we have a conic curve. By Stein factorisation, this map factors into a small contraction (of these two conics) followed by a 2-to-1 map to $\PP(1,2,4,5)$. Note that the small contraction is $K$-trivial, as $-K_Y\in\mathcal{O}_Y(1,0)$, hence these curves must be flopped. But the flop corresponds simply to switching the fibres of the 2-to-1 map, and the 2-ray game continues by symmetrically doing this process backwards to reach $X$ again, hence an elliptic involution. \\

\paragraph*{\bf Family No.\,$51^*$}$\quad\frac{1}{4}(1,1,3)\in X_{10,14}\subset\PP(1,2,4,5,6,7)$\\
 Recall that 
  \[f=s^2+tz+\cdots\quad\text{and}\quad g=w^2+t^2y+z^3y+z^2xs+\cdots\]
  and $m_f=m_g=1/2$, and the Cox ring of the toric space $T$ is
   \[\left(\begin{array}{ccccccc}
u&\z&\s&\t&\w&\x&\y\\
0&4&5&6&7&1&2\\
1&1&1&1&1&0&0\end{array}\right)\]
 and $Y$ is defined by
 \[\overline{f}=\s^2+\t\z+\cdots\quad\text{and}\quad \overline{g}=\z^2(\x\s+\z\y)+u(\w^2+\t^2y+\cdots)\]
which indicates that we must unproject (using $\overline{g}$) to introduce a new variable 
\[r=\frac{\x\s+\z\y}{u}=\frac{\w^2+\t^2\y+\cdots}{-z^2}\]
which embeds $Y$ defined by equations
\[\left\{\begin{array}{l}
ru=\x\s+z\y\\
r\z^2+\w^2+\t^2\y+\cdots=0\\
\s^2+\z\t+\cdots
\end{array}
\right.\]
into the toric space $T'$ with Cox ring
\[\left(\begin{array}{cccccccc}
u&\z&\s&\t&\w&\x&\y&r\\
0&4&5&6&7&1&2&6\\
1&1&1&1&1&0&0&0\end{array}\right)\]
The first step of the toric 2-ray game restricted to $Y$ is an isomorphism, as $s^2$ appears in the equations above. However, the second step does not restrict to a FLIP on $Y$, this is because the first equation and the third belong to the ideal $(u,z,s)$, which is indeed a component of the irrelevant ideal of the toric space $T'_1$, after the FLIP. In order to unproject this (fake) divisor we introduce a new variable $\eta$ as follows (see\,\cite[\S4]{graded} for details on this type of unprojection): first rewrite the third equation above as $uF+\z G+\s H=0$. Then write
\[\eta=\frac{\y H-\x G}{u}=\frac{rH-\x F}{\z}=\frac{rG-\y F}{\s}\]
The new toric space $T''$ is given by the Cox ring
\[\left(\begin{array}{ccccccccc}
u&\z&\s&\t&\w&\x&\y&r&\eta\\
0&4&5&6&7&1&2&6&7\\
1&1&1&1&1&0&0&0&0\end{array}\right)\]
where $Y$ is now defined by the equations
\[\left\{\begin{array}{l}
ru=\x\s+z\y\\
r\z^2+\w^2+\t^2\y+\cdots=0\\
\s^2+\z\t+\cdots\\
u\eta=\y H-\x G\\
\z\eta=rH-\x F\\
\s\eta=rG-\y F
\end{array}
\right.\]
Now the game follows and ends with a 2-to-1 map to $\PP1,2,6,7)$. As in the previous cases of an elliptic involution, one can check that it is 2-to-1 for all points with the difference that there is no curve being contracted, hence there is no flop involved. Switching the fibres of this map and going back in the direction of the 2-ray game completes the game for $Y$, and goes back to $Y$.

 \subsubsection{Failure of mobility}\label{f-mobility}
 
\begin{lm}\label{family-mob} Suppose $\{C_\alpha\}$ is a $1$-dimensional family of curves in $Y$, where $(Y,E)\rightarrow(X,\Gamma)$ is a ($\Q$-factorial) maximal extraction, that is an extremal extraction at a maximal centre, of a Fano variety $X$ at the centre $\Gamma$. If $C_\alpha .E>0$ and $C_\alpha .(-K)\leq 0$, then $Y$ cannot initiate a Sarkisov link for $X$.\end{lm}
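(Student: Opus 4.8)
The plan is to argue that under the stated hypotheses the 2-ray game on $Y$ cannot produce a Sarkisov link, by locating the obstruction at the last step of the game. By Theorem~\ref{idea-Fano}, if $\Gamma$ were a maximal centre then $Y$ would be a Mori dream space and we would have $-K_Y\in\Int\MMob(Y)$; so it suffices to show that the family $\{C_\alpha\}$ forces $-K_Y$ onto the boundary of (or outside) the mobile cone. Since $\rank\Pic(Y)=2$, the mobile cone $\MMob(Y)$ is a $2$-dimensional cone, decomposed as $\bigcup_i\Nef(Y_i)$ by Lemma~\ref{2-ray-MDS}, with $\Nef(Y)$ the first chamber (containing $-K_X$ pulled back, modulo $E$) and the last chamber $\Nef(Y_m)$ bounded by the ray dual to the curves contracted by the final morphism $\psi_m$.

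The key step is to show that the numerical class of a general $C_\alpha$ lies on the extremal ray of $\NNE(Y/ T)$ opposite to the one contracted by $\varphi$, i.e. that $C_\alpha$ is (proportional to) a curve in the fibre of the last contraction of the game. First I would observe that since $\{C_\alpha\}$ is a $1$-dimensional family, a general $C_\alpha$ moves, hence $[C_\alpha]$ cannot lie in the interior of $\NNE(Y)$ — a moving curve is nef-against nothing that bounds it from inside — and in fact its class must sit on the boundary ray $R_2$ of the effective cone of curves that is \emph{not} the one spanned by the $\varphi$-exceptional curves (the exceptional curves of a divisorial blow-up of a point do not move in a $1$-parameter family meeting $E$ positively). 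The condition $C_\alpha.E>0$ rules out $[C_\alpha]$ being proportional to the $\varphi$-contracted ray, since those curves are $E$-negative; so $[C_\alpha]$ spans $R_2$, the ray that the 2-ray game terminates on. Dually, $R_2$ determines the boundary ray of $\MMob(Y)$ opposite to $\Nef(Y)$, namely the wall of $\Nef(Y_m)$ hit at the end of the game.

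Finally I would translate the hypothesis $C_\alpha.(-K_Y)\le 0$ into the statement that $-K_Y$ pairs non-positively with the terminal ray $R_2$, hence $-K_Y$ does not lie strictly inside the cone dual to $R_2$; equivalently $-K_Y\notin\Int\MMob(Y)$. (Here I use $-K_Y=-K_X$ since $X$ is Fano and the blow-up is crepant-up-to-the-discrepancy term, so the pairing $-K_Y.C_\alpha$ is exactly the quantity appearing in the hypothesis.) By Theorem~\ref{idea-Fano} this contradicts $\Gamma$ being a maximal centre through which $Y$ initiates a Sarkisov link; more precisely, if the game did terminate as a Sarkisov link, its last morphism would be a $-K_{Y_m}$-positive Mori contraction, forcing $-K_Y.C>0$ for curves $C$ in its fibres, contrary to what we have just shown for $C_\alpha$ (whose class, being the moving extremal ray, survives the intermediate flips/flops to span the final contracted ray). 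The main obstacle I anticipate is justifying that the class of $C_\alpha$ genuinely \emph{is} the extremal ray terminating the game and is not destroyed by the intermediate anti-flips and flops — i.e. that a moving family of curves with $C_\alpha.E>0$ cannot be contracted at an intermediate flipping step but only at the terminal contraction; this requires a careful bookkeeping of how curve classes transform under the VGIT wall-crossings described in Section~\ref{2-ray}, using that each intermediate small contraction is $K$-trivial or $K$-negative on the relevant ray and that $C_\alpha.E>0$ pins the class away from the $\varphi$-side throughout.
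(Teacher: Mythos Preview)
Your argument has a genuine gap at the outset: the assertion that because $\{C_\alpha\}$ is a $1$-dimensional family the class $[C_\alpha]$ must lie on the boundary of $\NNE(Y)$ is false. Curves that move in families can have numerical class in the interior of the Mori cone --- a curve of bidegree $(1,1)$ on $\PP^1\times\PP^1$ is already a counterexample. Consequently you cannot deduce that $[C_\alpha]$ spans the extremal ray $R_2$, and the remainder of your plan --- identifying $[C_\alpha]$ with the ray contracted at the terminal step of the 2-ray game and tracking it through the intermediate FLIPs --- is both built on this false premise and, as you yourself flag at the end, never actually carried out.

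The paper's proof sidesteps all of this with a direct finiteness count. Suppose a Sarkisov link exists; then by Theorem~\ref{idea-Fano}, $Y$ is a Mori dream space and $-K_Y\in\Int\MMob(Y)$. The point is that under this hypothesis there are only \emph{finitely many} irreducible curves on $Y$ with $-K_Y.C\le 0$: those with strict inequality are the anti-flipping curves and those with equality are the flopping curves of the 2-ray game, and each such small contraction on a $3$-fold contracts only a finite set of curves. The hypothesis $C_\alpha.E>0$ pins the family to the $\psi$-side of the game, away from $\varphi$. Since $\{C_\alpha\}$ is a $1$-dimensional family satisfying $-K_Y.C_\alpha\le 0$, this is an immediate contradiction. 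No identification of $[C_\alpha]$ with an extremal ray, and no bookkeeping through wall-crossings, is required.
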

\begin{proof} Suppose there is such Sarkisov link. It follows from Theorem\,\ref{idea-Fano} that $Y$ is a Mori dream space and $-K\in\Int\Mob(Y)$. This, together with the assumption that $C_\alpha.E>0$ imply that there are only finitely many curves that intersect $-K_Y$ non-positively. In fact, those that intersect negatively correspond to anti-flipping curves and those with trivial intersection are the flopping curves. This contradicts the assumption.
\end{proof}

\paragraph*{\bf Family No.\,$37$}$\quad\frac{1}{2}(1,1,1)\in X_{8,12}\subset\PP(1,2,3,4,5,6)$\\
Okada's assumption is that $s^2\notin f$, which forces $s^3\in g$ by quasi-smoothness assumption. We can assume, without loss of generality, that the point to be excluded is $p_y$, in other words $y^4\notin f$ and $y^6\notin g$. We also have
\[f=tz+\cdots\quad\text{and}\quad g=s^3+w^2+t^2y+z^2w+\cdots\]
Note that we have also assumed, without loss of generality, that one of the $1/3(1,1,2)$ points is the $p_z$ point. Clearly $m_f=m_g=1$. So we can write down the Cox ring of $T$ as
\[\left(\begin{array}{ccccccc}
u&\y&\t&\z&\w&\s&\x\\
0&2&5&3&6&4&1\\
1&1&2&1&2&1&0\end{array}\right)\]
and $Y$ is defined by
\[\overline{f}=\t\z+\cdots\quad\text{and}\quad \overline{g}=u^2\s^3+u\w^2+\t^2\y+u\z^2\w+\cdots\]
In usual circumstances, we would have done an unprojection using $\overline{g}$. But if we do so, then we realise that there are more and (possibly) more unprojections to be done. Instead, we use Lemma\,\ref{family-mob} in this case. Since $-K_Y\in|\mathcal{O}_Y(1,0)|$, the Cox ring of $T$ suggests that the family of curves must be contained in the locus $\{\x=0\}$. Indeed, we take 
\[C_{\lambda,\mu}=\{\x=\lambda\z^2+\mu\w=\overline{f}=\overline{g}=0\}\subset Y\]
Numerically, all these curves are proportional, so we only do the computation for $C=C_{0,1}$. 

Let us recall that the irrelevant ideal of $T$ is $I_T=(u,\y)\cap(\t,\z,\w,\s,\x)$. Setting $u=0$, that is $E=\{u=0\}$, forces $\y\neq 0$ indicated by the first component of the ideal $I_T$. Hence, we can set the second component of the $(\C^*)^2$ action, to set $\y=1$. Therefore, $C.E$ corresponds to the solution of
\[\overline{f}(0,1,\t,\z,0,0,\s,0)=\overline{g}(0,1,\t,\z,0,0,\s,0)=0\subset\PP_{\t:\z:\s}(1,1,2)\]
which gives $C.E=2$. 

On the other hand, $-2K_Y\sim D-E$, where $D$ is the divisor $\{\y=0\}$. Similar to $C\cdot E$, one can check that $C\cdot D=\frac{7}{5}$. In particular, we have that $C\cdot{-K_Y}<0$. Now it from Lemma\,\ref{family-mob} that $-K_Y$ is not mobile.

\bibliographystyle{amsplain}
\bibliography{bib}

\vspace{0.6cm}

School of Mathematics, University of Bristol, Bristol BS8 1TW, UK

e-mail: \url{h.ahmadinezhad@bristol.ac.uk}

\vspace{0.4cm}

Dipartimento di Matematica e Informatica, Universit\`{a} degli Stud\^{i} di Udine,

Via delle Scienze, 206, 33100 Udine, Italia

e-mail: \url{francesco.zucconi@uniud.it}

\end{document}